\newcommand{\va}{{\mathbb \varepsilon}}
\newcommand{\R}{{\mathbb R}}
\newcommand{\z}{{\mathbb Z}}
\newcommand{\La}{\Lambda}
\newcommand{\dis}{\displaystyle}
\theoremstyle{plain}
\newtheorem{theorem}{Theorem}[section]
\newtheorem{lemma}{Lemma}[section]
\newtheorem{proposition}{Proposition}[section]
\numberwithin{equation}{section}
\newcommand{\beq}{\begin{equation}}
\newcommand{\eeq}{\end{equation}}
\begin{document}

\def\ST{\songti\rm\relax}
\def\HT{\bf\relax}
\def\FS{\fangsong\rm\relax}
\def\KS{\kaishu\rm\relax}
\def\REF#1{\par\hangindent\parindent\indent\llap{#1\enspace}\ignorespaces}
\def\pd#1#2{\frac{\partial#1}{\partial#2}}
\def\ppd#1#2{\frac{\partial^2#1}{\partial#2^2}}

\title  {{ The global existence and time-decay for the solutions of the fractional pseudo-parabolic equation}
\thanks {Research was supported by the Natural Science Foundation of China
(11101160,11271141) and  China Scholarship Council (201508440330).}}
\author
{\small { Lingyu Jin, Lang Li and Shaomei Fang}\\
\footnotesize  { Department of Mathematics,   South China Agricultural University,   Guangzhou 510642,China}}

\maketitle

 \noindent
 {\bf Abstract:}
We consider the Cauchy problem of fractional pseudo-parabolic equation on the whole space $\R^n,n\geq 1$. Here, the fractional order $\alpha$ is related to the diffusion-type source term behaving as the usual diffusion term  on the high frequency part. It has  a feature of regularity-gain and regularity-loss for $\alpha> 1$ and $0<\alpha < 1$, respectively. We establish the global existence and time-decay rates for small-amplitude solutions to the  Cauchy problem for $\alpha>0$. In the case that  $0<\alpha < 1$ , we introduce the time-weighted energy method to overcome the weakly dissipative property of the equation. 

\noindent
{\bf Keywords:} time-decay; fractional pseudo-parabolic equation; regularity-loss.

\section{Introduction}
In this paper we consider the Cauchy problem of the following fractional pseudo-parabolic equation
\begin{eqnarray}\label{1.1}
\begin{cases}
u_t-m\Delta u_t+(-\Delta )^{\alpha}u=u^{\theta+1},\,\,&x\in\R^n ,t>0,\\
u(0,x)=u_0(x),\,\,&x\in\R^n.
\end{cases}
\end{eqnarray}

For $m>0, \alpha=1$, (\ref{1.1}) is called the pseudo-parabolic equation
 (refer to \cite{T},\cite{ST}) since the solutions of the initial-boundary value problem for a parabolic equation can be obtained as the the limit of some sequence of solutions of the Cauchy problems for the corresponding pseudo-parabolic equations. In addition, the initial-boundary value problem for the case $m=0$ is also well-posed for the pseudo-parabolic equation (1.1).
Furthermore, equation (1.1) can be regarded as a Sobolev-type  or a Sobolev-Galpern type equation. A large amount of physical phenomena  such as seepage of homogeneous fluids through a fissured rock(\cite{BZ}), aggregation of populations (\cite{L}) can be described by pseudo-parabolic equations.
 Ting, Showalter and Gopala Rao (refer to \cite{T}, \cite{ST}, \cite{GT}) investigated the initial-boundary value problem and established the existence and uniqueness of solutions.  From then on, considerable attention
has been paid to the study of nonlinear pseudo-parabolic equations,  including singular pseudo-parabolic equations and degenerate pseudo-parabolic equations (see \cite{L}, \cite{B}, \cite{CY},   \cite{FY}, \cite{KN}, \cite{KQ}, \cite{P}, \cite{SF}, \cite{K}). Since the fractional dissipation operator $(-\Delta)^{\alpha}$ is nonlocal and can be regarded as the infinitesimal generators of Levy stable diffusion processes, many scientists have found that it describes some physical phenomena more exact than integral differential equations( refer to \cite{PG}, \cite{GHX}, \cite{T2}, \cite{P1},  \cite{KS1}). More and more work has been devoted to the investigation of fractional differential equations( \cite{PG}, \cite{GHX},  \cite{CS}, \cite{Z}). Motivated by these results, we will mainly study the fractional pseudo-parabolic equations (\ref{1.1}).  Obviously, the equation (1.1) is of the regularity-gain type for $\alpha > 1$, and whereas of the regularity-loss type for $0<\alpha<1$. For equations with regularity-loss structure, a large amount of results has been established (refer to  \cite{D}, \cite{DR}, \cite{IH}).

   In this paper, we consider the fractional pseudo-parabolic equations for $\alpha>0$. The main purpose is to  obtain the well-posedness and large-time behavior of the Cauchy problem (1.1) for small-amplitude solutions.   We establish the global existence of solutions and the time-decay rate of solutions and their derivatives up to some order, where the extra regularity on initial data is required in the case of the regularity-loss type.

We now introduce some notations. 

 In what follows, we denote generic positive constants by $c$ and $C$ which may change from line to line. The Fourier transform
 $\hat{f}$  of a tempered distribution $f(x)$ on $\R^n$ is defined as
 \[\mathcal{F}[f(x)](\xi)= {\hat f(\xi)}=\frac{1}{(2\pi)^n
 	}\int_{R^n}f(x)e^{-i\xi\cdot x}dx.\]

We will denote the square root of the Laplacian $(-\Delta)^{\frac{1}{2}}$ by $\Lambda$ and obviously
\[\widehat{\Lambda f}=|\xi|\hat{f}(\xi).\]
Denote $H^{s}(\R^n)$ the general Sobolev space with the norm $$\|f\|^2_{H^s}=\int_{\R^n}(1+|\xi|^2)^{s}|\hat f|^2d\xi.$$
For $s=0$, $H^0(\R^n)=L^2(\R^n)$.
 Obviously $H^{s}(\R^n)$ is the Hilbert space, and with the inner product
$$(f,g)_{H^{s}}=\int_{\R^n}\hat f (\xi)\overline{\hat{g}(\xi)}(1+|\xi|^2)^sd\xi.$$

In this paper, we consider the Cauchy problem (\ref{1.1}) under  some conditions on $\theta$, $\alpha,n$, we are mostly interested in the large time behavior of the solutions.

 Now we introduce the main theorems in this paper.
  \begin{theorem}
  	Let $s> \frac{n}{2} ,\,\,\alpha\geq 1,\theta>\frac{4\alpha}{n},\theta\in \z
  $.  Assume that $u_0\in H^s(\R^n)\bigcap L^1(\R^n)$ and let $E_0=\|u_0\|_{H^s}+\|u_0\|_{L^1}$. Then there exists a small positive constant $\delta_0$ such that for  $E_0\leq \delta_0$, (1.1)  has a global solution  satisfying
  	the following decay estimate
  	$$\|\Lambda ^l  u(t)\|_{L^2} \leq c E_0(1+t)^{-\frac{n}{4\alpha}-\frac{l}{2\alpha}},\,\,\forall\,\,\, 0\leq l\leq s.$$
   \end{theorem}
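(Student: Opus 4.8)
The plan is to recast (1.1) through its solution semigroup and run a bootstrap argument in a time-weighted norm, with the decay coming from a Fourier splitting of the linear multiplier. Taking the Fourier transform of the linear part gives $\widehat{u}_t+\lambda(\xi)\widehat{u}=(1+m|\xi|^2)^{-1}\widehat{u^{\theta+1}}$ with $\lambda(\xi)=|\xi|^{2\alpha}/(1+m|\xi|^2)$, so that
$$ u(t)=S(t)u_0+\int_0^t S(t-\tau)\,K\big[u^{\theta+1}(\tau)\big]\,d\tau, $$
where $S(t)$ has symbol $e^{-\lambda(\xi)t}$ and $K=(1-m\Delta)^{-1}$ has symbol $(1+m|\xi|^2)^{-1}$. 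First I would analyze $\lambda$: for $|\xi|\leq 1$ it behaves like $|\xi|^{2\alpha}$, which produces the algebraic decay, while for $|\xi|\geq 1$ and $\alpha\geq 1$ one has $\lambda(\xi)\gtrsim |\xi|^{2\alpha-2}\gtrsim 1$, so high modes decay exponentially --- this is the regularity-gain feature. Splitting the frequency integral accordingly and using $\|\widehat{f}\|_{L^\infty}\lesssim\|f\|_{L^1}$ on low frequencies would yield the linear estimate $\|\Lambda^l S(t)f\|_{L^2}\lesssim (1+t)^{-\frac{n}{4\alpha}-\frac{l}{2\alpha}}\|f\|_{L^1}+e^{-ct}\|f\|_{H^l}$ for $0\leq l\leq s$, together with the analogous bound for $S(t)K$ exploiting the two extra derivatives gained from $K$ on the high-frequency piece.

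Next I would set up the iteration in the space with norm
$$ M(t)=\sup_{0\leq\tau\leq t}\Big[(1+\tau)^{\frac{n}{4\alpha}}\|u(\tau)\|_{L^2}+(1+\tau)^{\frac{n}{4\alpha}+\frac{s}{2\alpha}}\|\Lambda^s u(\tau)\|_{L^2}\Big], $$
the intermediate orders $0<l<s$ following by interpolation and $\|u\|_{L^\infty}$ being controlled by Gagliardo--Nirenberg. Since $s>\frac{n}{2}$, $H^s$ is an algebra and $\theta$ is an integer, so the polynomial nonlinearity obeys Moser-type bounds $\|\Lambda^s(u^{\theta+1})\|_{L^2}\lesssim\|u\|_{L^\infty}^{\theta}\|\Lambda^s u\|_{L^2}$ and $\|u^{\theta+1}\|_{L^1}\lesssim\|u\|_{L^{\theta+1}}^{\theta+1}$; combining these with the rates encoded in $M(t)$ gives a time-decay exponent for $\|u^{\theta+1}(\tau)\|_{L^1}$ that is the quantity governing everything. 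Feeding these into the Duhamel formula through the linear estimates reduces the matter to the scalar convolution integral $\int_0^t(1+t-\tau)^{-a}(1+\tau)^{-b}\,d\tau$.

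The decisive step is this time integral: one needs $\int_0^t(1+t-\tau)^{-a}(1+\tau)^{-b}\,d\tau\lesssim (1+t)^{-a}$ with $a=\frac{n}{4\alpha}+\frac{l}{2\alpha}$ and $b$ the decay exponent of the nonlinearity, and this is exactly where $\theta>\frac{4\alpha}{n}$ enters: it forces the relevant $b>1$ so that the nonlinear feedback is integrable in time and reproduces the target rate without losing decay. A standard splitting of $[0,t]$ at $t/2$ proves the convolution bound. With these estimates in hand the Duhamel map satisfies $M(t)\lesssim E_0+M(t)^{\theta+1}$ uniformly in $t$; choosing $\delta_0$ small and using the continuity of $M$ yields the a priori bound $M(t)\lesssim E_0$. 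This gives both global existence --- by combining local well-posedness (a fixed point in $C([0,T];H^s)$, since $S(t)$ is a contraction on each $H^{s'}$ and $K$ is smoothing) with the uniform bound for continuation --- and the stated decay for every $0\leq l\leq s$.

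I expect the main obstacle to be the top-order decay: obtaining the sharp rate $(1+t)^{-\frac{n}{4\alpha}-\frac{s}{2\alpha}}$ for $\|\Lambda^s u\|_{L^2}$ requires keeping the low-frequency part (governed by the $L^1$--$L^2$ linear decay) and the high-frequency part (governed by the weak dissipation, merely $\lambda(\xi)\gtrsim 1$ at $\alpha=1$, together with the smoothing of $K$) strictly separate, and then checking that the nonlinear term still decays fast enough that the convolution does not degrade the rate --- precisely the balance secured by $\theta>\frac{4\alpha}{n}$.
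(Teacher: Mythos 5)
Your proposal is correct in substance and shares the paper's core architecture: the same low/high frequency splitting of the Green function, the same $L^1\!\to\!L^2$ decay on low frequencies producing the rate $(1+t)^{-\frac{n}{4\alpha}-\frac{l}{2\alpha}}$, the same Moser bound $\|\Lambda^l(u^{\theta+1})\|_{L^2}\lesssim\|u\|_{L^\infty}^{\theta}\|\Lambda^l u\|_{L^2}$, the split of the Duhamel integral at $t/2$ with $\theta>\frac{4\alpha}{n}$ guaranteeing integrability, and the closing bootstrap $M\lesssim E_0+M^{\theta+1}$. The one genuine divergence is in the high-frequency part of the solution. You run Duhamel all the way through, using the exponential decay $e^{-ct}$ of the high-frequency semigroup (valid precisely because $\alpha\geq1$ makes $\lambda(\xi)\gtrsim1$ there) together with the smoothing of $K=(1-m\Delta)^{-1}$. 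The paper instead never writes a Duhamel formula for $u_H$ in this regime: it applies $(1-\chi(D))$ to the equation, performs an $L^2$ energy estimate on $\Lambda^{l-1}u_H$, invokes the Poincar\'e-type inequality $\|u_H\|_{H^l}\leq C\|\Lambda^{l-\bar\alpha}u_H\|_{L^2}$ available because the frequencies of $u_H$ are bounded below, and closes with Gronwall against the already-established decay of $\|\Lambda^l u_L\|_{L^2}$. For $\alpha\geq1$ the two routes are essentially interchangeable and yours is arguably the more streamlined; the payoff of the paper's energy-method formulation is that it is the version that survives (with time weights) in the regularity-loss regime $0<\alpha<1$ of Theorem 1.2, where the high-frequency semigroup no longer decays exponentially and a pure Duhamel argument at top order would lose derivatives. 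Your remark that the obstacle is the top-order rate is well placed, and your interpolation reduction to $l=0$ and $l=s$ is a harmless simplification of the paper's norm $M_1(t)$, which carries all orders $0\leq l\leq s$ explicitly.
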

  
 \begin{theorem}
Assume that $0<\alpha<1$, $ \theta>\frac{4\alpha}{n},\theta\in \z$ and
 $[\frac{s}{2\bar{\alpha}}]\geq\frac{n}{2\alpha}+\frac{3}{2}\bar{\alpha}.$  Let $E_0=\|u_0\|_{H^s}+\|u_0\|_{L^1}$ for $u_0\in H^s(\R^n)\bigcap L^1(\R^n)$. Then there exists a small positive constant $\delta_0$ such that for $E_0\leq \delta_0$, (1.1)  has a global solution  satisfying
 $$\|\Lambda ^l  u(t)\|_{L^2} \leq c E_0(1+t)^{-\frac{n}{4\alpha}-\frac{l}{2\alpha}}, \text{ for } 0\leq l\leq N_0$$
 with $\bar \alpha=1-\alpha$, $ N_0=\alpha \min\{s-\frac{n}{2\alpha}\bar{\alpha}, ([\frac{s}{2\bar{\alpha}}]-1)\bar{\alpha}-\frac{n}{2\alpha}\bar{\alpha}+2\}$.
 \end{theorem}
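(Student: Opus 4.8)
The plan is to reduce the nonlinear problem to a fixed-point/continuity argument built on a sharp decay estimate for the linearized evolution, and to absorb the regularity-loss structure into a carefully weighted solution space. First I would analyze the linear semigroup: applying the Fourier transform to the linear part of (1.1) gives $\hat u(t,\xi)=e^{-\lambda(\xi)t}\hat u_0(\xi)$ with symbol
$$\lambda(\xi)=\frac{|\xi|^{2\alpha}}{1+m|\xi|^2}.$$
For $|\xi|\le 1$ one has $\lambda(\xi)\sim|\xi|^{2\alpha}$, so the low-frequency part behaves like the fractional heat kernel and the usual $L^1$–$L^2$ estimate yields the full rate $(1+t)^{-\frac{n}{4\alpha}-\frac{l}{2\alpha}}$. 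For $|\xi|\ge 1$, however, $\lambda(\xi)\sim|\xi|^{-2\bar\alpha}$ decays in $\xi$, which is the regularity-loss mechanism. Using the elementary bound $e^{-c t|\xi|^{-2\bar\alpha}}\le C_\nu\,t^{-\nu}|\xi|^{2\nu\bar\alpha}$ one trades time decay for a loss of derivatives, so on the high-frequency region
$$\big\|\Lambda^l e^{-\lambda(\cdot)t}\hat u_0\big\|_{L^2(|\xi|\ge1)}\lesssim t^{-\nu}\|u_0\|_{H^{l+2\nu\bar\alpha}}.$$
Choosing $\nu=\frac{n}{4\alpha}+\frac{l}{2\alpha}$ to reproduce the low-frequency rate costs $l+2\nu\bar\alpha=\frac{l}{\alpha}+\frac{n\bar\alpha}{2\alpha}$ derivatives; requiring this to be at most $s$ yields precisely the first branch $\alpha\big(s-\frac{n}{2\alpha}\bar\alpha\big)$ in the admissible range for $N_0$.

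Next I would fix the time-weighted solution space. Define
$$M(t)=\sup_{0\le\tau\le t}\Big(\sum_{0\le l\le N_0}(1+\tau)^{\frac{n}{4\alpha}+\frac{l}{2\alpha}}\|\Lambda^l u(\tau)\|_{L^2}+\|u(\tau)\|_{H^s}\Big),$$
so that the low-order derivatives carry the conjectured optimal weights while the full $H^s$ norm is merely kept bounded (not decaying) in order to feed the high-frequency estimates. A basic energy identity, obtained by taking the $H^s$ inner product of (1.1) with $u$, gives
$$\tfrac12\tfrac{d}{dt}\big(\|u\|_{H^s}^2+m\|\Lambda u\|_{H^s}^2\big)+\|\Lambda^\alpha u\|_{H^s}^2=(u^{\theta+1},u)_{H^s};$$
since $\alpha<1$, the dissipation $\|\Lambda^\alpha u\|_{H^s}^2$ controls fewer derivatives than the energy, so a naive top-order estimate cannot produce decay. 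This is exactly the obstruction motivating the weighted formulation, in which decay is extracted only up to order $N_0$.

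I would then represent the solution by Duhamel's principle,
$$u(t)=G(t)\ast u_0+\int_0^t K(t-\tau)\ast u^{\theta+1}(\tau)\,d\tau,$$
where $K$ carries the extra factor $(1+m|\xi|^2)^{-1}$, which gains two derivatives at high frequency and partly compensates the loss. For the low-frequency ($L^1$) input, Gagliardo–Nirenberg interpolation against the weights in $M(t)$ gives $\|u^{\theta+1}(\tau)\|_{L^1}=\|u(\tau)\|_{L^{\theta+1}}^{\theta+1}\lesssim (1+\tau)^{-\frac{n\theta}{2\alpha}}M(t)^{\theta+1}$, and the hypothesis $\theta>\frac{4\alpha}{n}$ makes the exponent exceed $2$, which is more than enough for the standard time-convolution lemma to reproduce $(1+t)^{-\frac{n}{4\alpha}-\frac{l}{2\alpha}}$. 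For the high-frequency part one estimates $\|\Lambda^k u^{\theta+1}\|_{L^2}$ with $k$ dictated by the derivative/decay trade-off above; tracking the resulting derivative count is what forces the second branch of $N_0$, in which the floor $[\frac{s}{2\bar\alpha}]$ counts how many times one may iterate the exchange of $2\bar\alpha$ derivatives per unit of decay, and the standing assumption $[\frac{s}{2\bar\alpha}]\ge\frac{n}{2\alpha}+\frac32\bar\alpha$ guarantees this regularity budget is nonempty.

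Finally I would close by a continuity argument: the estimates above yield $M(t)\le C E_0+C M(t)^{\theta+1}$, so for $E_0\le\delta_0$ small the a priori bound $M(t)\le 2CE_0$ propagates, giving global existence together with the stated decay for $0\le l\le N_0$. The main obstacle is not a single inequality but the bookkeeping of regularity: because high frequencies are only weakly dissipated, every unit of time decay must be bought with derivatives, and one must simultaneously leave enough regularity to run the nonlinear interpolation and keep the weighted norm self-consistent through the iteration. Arranging $M(t)$ and the extraction exponents so that both the linear budget $\alpha\big(s-\frac{n}{2\alpha}\bar\alpha\big)$ and the nonlinear/iteration budget $\alpha\big(([\frac{s}{2\bar\alpha}]-1)\bar\alpha-\frac{n}{2\alpha}\bar\alpha+2\big)$ are respected — their minimum being exactly $N_0$ — is the crux of the proof.
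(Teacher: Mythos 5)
Your overall architecture matches the paper's: a low/high frequency splitting of the Green function, the $L^1$--$L^2$ estimate at low frequency, the exchange of $2\bar\alpha$ derivatives per unit of time decay at high frequency (which correctly produces the first branch $\alpha(s-\frac{n}{2\alpha}\bar\alpha)$ of $N_0$), Duhamel's formula with the smoothing factor $(1+m|\xi|^2)^{-1}$, and a closing continuity argument. The genuine gap is in how you control the high-frequency part. You keep only a \emph{bounded} $\|u\|_{H^s}$ inside $M(t)$, whereas the paper's proof hinges on the inductive hierarchy of time-weighted energy functionals
$$E(t)^2=\sum_{j=0}^{[\frac{s}{2\bar\alpha}]-1}\sup_{0\le y\le t}(1+y)^{(j-\frac12)\bar\alpha}\|\Lambda^{j\bar\alpha}u_H(y)\|^2_{H^{s-2j\bar\alpha}}$$
together with the space--time dissipation norm $L(t)$, established in Proposition 4.5 by induction on $j$ (the dissipation produced at step $j$ controls the energy at step $j+1$). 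This hierarchy is not an optional refinement. First, the growing weights $(1+y)^{(j-\frac12)\bar\alpha}$ give actual \emph{decay} of the intermediate derivatives $\|\Lambda^{j\bar\alpha}u_H\|_{L^2}$, and it is exactly this decay, interpolated under the standing hypothesis $([\frac{s}{2\bar\alpha}]-1)\bar\alpha>\frac{n}{2\alpha}+\frac12\bar\alpha$, that yields the sharp rate $\|u\|_{L^\infty}\lesssim(1+t)^{-\frac{n}{4\alpha}}$. With only a bounded $H^s$ norm, Gagliardo--Nirenberg gives the degraded rate $(1+t)^{-\frac{n}{4\alpha}(1-\frac{n}{2s})}$ (your $M(t)$ cannot help here, since $N_0$ need not exceed $n/2$ under the stated hypotheses), and then convergence of your Duhamel integrals would require $\frac{n\theta}{4\alpha}(1-\frac{n}{2s})>1$, which does not follow from $\theta>\frac{4\alpha}{n}$. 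Second, the second branch of $N_0$ arises precisely from the constraint $\beta_1+l-2\le([\frac{s}{2\bar\alpha}]-1)\bar\alpha$, with $\beta_1=\bar\alpha(\frac{n}{2\alpha}+\frac{l}{\alpha})$, needed so that $\|\Lambda^{\beta_1+l-2}u_H\|_{L^2}$ is controlled by $E(t)$ when estimating $\int_0^{t/2}G_H(t-\tau)*B^{-1}(u^{\theta+1})\,d\tau$; you correctly guess that the floor function counts iterations, but in your scheme there is no functional playing the role of $E(t)$, so the second branch is asserted rather than derived.

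A smaller but real defect: the energy identity you write, obtained by pairing the equation with $u$ in $H^s$, has energy $\|u\|_{H^s}^2+m\|\Lambda u\|_{H^s}^2$, an $H^{s+1}$-level quantity, so even the boundedness of $\|u\|_{H^s}$ does not follow from it when $u_0$ is only in $H^s$. The paper avoids this by applying $\Lambda^{s-1}$ to the equation satisfied by $u_H$ and pairing with $\Lambda^{s-1}u_H$, so that the resulting quantity $\|\Lambda^{s-1}u_H\|_{L^2}^2+m\|\Lambda^{s}u_H\|_{L^2}^2$ is equivalent to $\|u_H\|_{H^s}^2$ on the high-frequency region, where a Poincar\'e-type inequality holds. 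To repair your argument you would need to (i) run the energy estimates on $u_H$ at this shifted order, and (ii) introduce the weighted family $E(t)$, $L(t)$ and prove the inductive time-weighted estimate before the Duhamel bounds for $u_H$ and the nonlinear closure can go through.
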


The time-decay rate in the case of $\alpha<1$ is obtained for $u$ and its lower-order derivatives only, but not for all derivatives up to $s$, whereas in the case of $\alpha> 1$ is obtained for all derivatives up to $s$. Thus  (1.1)  is of the regularity-gain type for $\alpha> 1$ and the regularity-loss type for $0<\alpha<1$. 
The difference between Theorem 1.1 and Theorem 1.2 arises from the property of the high frequency part of the linearized equation (3.1) (refer to Lemma 3.2). For the regularity-loss type ($0<\alpha<1$), we mainly take the time-weighted energy method to overcome the weakly dissipative property of the equation (refer to \cite{D}, \cite{DR}, \cite{IH}). In addition, multiplying (\ref{1.1}) by $u$ and integrating, 
\begin{eqnarray}\label{1.2j}
\frac{d}{dt}\bigl (\|u\|^2_{L^2}+m\|\La u\|^2_{L^2})+\|\La^{\alpha/2}u\|^2_{L^2}=\int_{\R^n}u^{\theta+2}dx,
\end{eqnarray}
since the special structure of (\ref{1.2j}), we can not have an estimate of the lower order term $\|u\|_{L^2}$ as usual. Thus the right side of (1.2) can not be controlled by standard energy method. We employ the approach of a long wave and short wave method (refer to \cite{D4}) to overcome this difficulty.
The rest of paper is organized as follows. In Section 2, we give some preliminary lemmas. In Section 3, we establish some decay estimates for the linearized system. Finally we get decay estimates for the solution of (\ref{1.1}) for $\alpha\geq 1$ and $0 < \alpha < 1$ respectively, thus the global existence of the solutions in both cases is proved in Section 4.

\section{Preliminaries}
In this section, we give some preliminary lemmas.

\begin{lemma}\label{l2.1}
	(Refer to \cite{S2}, \cite{J}) Assume that $1\leq p,q,r\leq +\infty, \frac{1}{r}=\frac{1}{p}+\frac 1 q$ and $g\,\,,h\in W^{l,q}(\R^n)\bigcap L^{p}(\R^n)$, then
\begin{eqnarray}
\|\La ^l(gh)\|_{L^r}\leq C(\|g\|_{L^p}\|\La^l h\|_{L^q}+\|\La^l g\|_{L^q}\|h\|_{L^p}).
\end{eqnarray}
\end{lemma}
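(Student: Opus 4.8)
The plan is to establish this fractional Leibniz rule (a Kato--Ponce type estimate) through a Littlewood--Paley paraproduct decomposition, since $\La^l=(-\Delta)^{l/2}$ is a nonlocal operator and the classical product rule is unavailable for non-even $l$. First I would fix a dyadic partition of unity on the frequency side and introduce the projections $\Delta_j$ onto $|\xi|\sim 2^j$ together with the low-frequency truncations $S_j=\sum_{k<j}\Delta_k$. Bony's decomposition then writes
$$gh=T_gh+T_hg+R(g,h),\qquad T_gh=\sum_j S_{j-1}g\,\Delta_j h,\quad R(g,h)=\sum_{|j-k|\le 1}\Delta_j g\,\Delta_k h.$$
By the symmetry of the claimed bound in $(g,h)$, it suffices to control $\|\La^l T_gh\|_{L^r}$ and $\|\La^l R(g,h)\|_{L^r}$; the term $T_hg$ is estimated exactly like $T_gh$ after interchanging the roles of $g$ and $h$.

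For the paraproduct $T_gh$, each summand $S_{j-1}g\,\Delta_j h$ is frequency-localized to the annulus $|\xi|\sim 2^j$, on which $\La^l$ acts as multiplication by $2^{jl}$ up to a smooth, uniformly bounded multiplier. Exploiting the bounded overlap of these annuli and the Littlewood--Paley square-function characterization of the $L^r$ norm, I would reduce the estimate to bounding $\bigl\|(\sum_j (2^{jl}|S_{j-1}g|\,|\Delta_j h|)^2)^{1/2}\bigr\|_{L^r}$. Pulling out $\sup_j|S_{j-1}g|\lesssim Mg$, where $M$ is the Hardy--Littlewood maximal operator, and applying H\"older's inequality with $\frac1r=\frac1p+\frac1q$, this splits into $\|Mg\|_{L^p}$ times the square function $\bigl\|(\sum_j(2^{jl}|\Delta_jh|)^2)^{1/2}\bigr\|_{L^q}$. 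The maximal inequality gives $\|Mg\|_{L^p}\lesssim\|g\|_{L^p}$ and the square function is comparable to $\|\La^lh\|_{L^q}$, yielding the first term on the right-hand side.

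The main obstacle is the remainder $R(g,h)$, where the factors $\Delta_jg$ and $\Delta_kh$ have comparable frequencies but their product is supported only in the ball $|\xi|\lesssim 2^{j+1}$ rather than in a separated annulus, so $\La^l$ can no longer be read off from a single input scale. Here I would reorganize the sum by output frequency, writing $\La^l\Delta_m R(g,h)$ and noting that only terms with $j\gtrsim m$ contribute; on the output annulus $\La^l\Delta_m$ contributes a factor $2^{ml}$, and Bernstein's inequality transfers the remaining derivatives onto the higher input scales. Because $l\ge 0$, the resulting sum over $m\le j$ is dominated by a convergent geometric series in $2^{(m-j)l}$, and a second use of H\"older together with the Fefferman--Stein vector-valued maximal inequality collapses the double sum into $\|\La^l g\|_{L^q}\|h\|_{L^p}+\|g\|_{L^p}\|\La^lh\|_{L^q}$. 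The delicate points are securing the range $1<r<\infty$ required for the maximal and square-function inequalities and handling the borderline summation when $l$ is near $0$; for the integer values of $l$ actually used in the sequel one may alternatively bypass the remainder analysis by combining the ordinary Leibniz rule with Gagliardo--Nirenberg interpolation.
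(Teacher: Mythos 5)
The paper offers no proof of Lemma \ref{l2.1}: it is quoted as a known product estimate with references to \cite{S2} and \cite{J}, so there is no internal argument to compare yours against. Your Littlewood--Paley/Bony paraproduct outline is the standard modern proof of this Kato--Ponce type fractional Leibniz rule, and the ingredients are the right ones: frequency localization of $S_{j-1}g\,\Delta_j h$ to an annulus $|\xi|\sim 2^j$ so that $\La^l$ costs $2^{jl}$ times a uniformly bounded multiplier, the bound $\sup_j|S_{j-1}g|\lesssim Mg$, H\"older with $\frac 1r=\frac 1p+\frac 1q$, the square-function characterization of $\|\La^l h\|_{L^q}$, and a geometric-series summation in $2^{(m-j)l}$ for the remainder (which requires $l>0$ strictly, not merely $l\ge 0$ as you write; for $l=0$ the statement is just H\"older anyway). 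Two caveats are worth recording. First, the endpoint issues you flag are genuine: the square-function and Fefferman--Stein inequalities need the relevant exponents in $(1,\infty)$, whereas the lemma as stated allows $p,q,r$ to be $1$ or $\infty$; fortunately the paper only ever invokes the estimate with $r=q=2$ and $p\in\{2,\infty\}$ (via Lemma \ref{l2.9}), where your argument goes through because the square function is taken in $L^2$ and $M$ is trivially bounded on $L^\infty$. Second, your proposed shortcut for integer $l$ (classical Leibniz rule plus Gagliardo--Nirenberg) does not actually cover all the uses in Section 4, where the estimate is applied with genuinely fractional orders such as $l=j\bar{\alpha}$ and $l=s-j\bar{\alpha}$; so the paraproduct argument, or a direct appeal to the cited references, is needed in general. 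With the restriction $l>0$ made explicit and the exponent ranges confined to those actually used, your sketch is a sound proof strategy.
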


\begin{lemma}\label{l2.4}
(Galiardo-Nirenberg inequality, \cite{H, T1})
Suppose that $u\in L^q(\mathbb R^n)\cap W^{m,r}(\mathbb R^n)$, where $1\leq q, r\leq\infty$. Then there exists a constant $C>0$, such that
\begin{eqnarray}\|D^j	u\|_{L^p}\leq C\|D^mu\|^a_{L^r}\|u\|^{1-a}_{L^q},
\end{eqnarray}
where
$$\frac{1}{p}=\frac{j}{n}+a\left(\frac{1}{r}-\frac{m}{n}\right)+(1-a)\frac{1}{q},$$
$1\leq p\leq\infty$, $j$ is an integer, $0\leq j\leq m, j/m\leq a\leq 1$. If $m-j-n/r$ is a nonnegative integer, then the inequality holds for $j/m\leq a<1$.
\end{lemma}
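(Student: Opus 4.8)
The plan is to derive this classical interpolation inequality by reducing it to the first-order Sobolev estimate and then bootstrapping in the order of differentiation. First I would establish the base case $j=0$, $m=1$, $r=1$, $p=\frac{n}{n-1}$ (so $a=1$) for $u\in C_c^1(\R^n)$, namely the Gagliardo--Nirenberg--Sobolev bound $\|u\|_{L^{n/(n-1)}}\leq \prod_{i=1}^n\|\partial_{x_i}u\|_{L^1}^{1/n}$. Its proof rests on the pointwise estimate $|u(x)|\leq \int_{\R}|\partial_{x_i}u|\,dx_i$ coming from the fundamental theorem of calculus in the $i$-th direction; taking the geometric mean over $i=1,\dots,n$ and integrating the product one variable at a time, applying the generalized H\"older inequality at each step, yields the claim, and a density argument extends it to $W^{1,1}\cap L^{n/(n-1)}$.

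Second, I would upgrade to the general first-order case $j=0$, $m=1$ with arbitrary admissible $p,q,r$. Applying the base estimate to $v=|u|^{\gamma}$ for a suitable $\gamma>1$ and splitting the resulting gradient $|Dv|\sim\gamma|u|^{\gamma-1}|Du|$ by H\"older's inequality, $\big\||u|^{\gamma-1}Du\big\|_{L^1}\leq \big\||u|^{\gamma-1}\big\|_{L^{r'}}\|Du\|_{L^r}$ with $\frac1r+\frac1{r'}=1$, one obtains after rearranging a bound $\|u\|_{L^p}\leq C\|Du\|_{L^r}^a\|u\|_{L^q}^{1-a}$ with $a=1/\gamma$. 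The exponent identity in the statement is precisely the dimensional homogeneity (scaling) condition forced by testing the inequality on the rescaled functions $u_\lambda(x)=u(\lambda x)$, which pins down the remaining free exponent uniquely.

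Third, for general $j\leq m$ I would induct on $m$. The key ingredient is the intermediate-derivative estimate $\|Df\|_{L^p}^2\leq C\|D^2f\|_{L^r}\|f\|_{L^q}$, obtained from integration by parts together with the first-order step; iterating it produces the endpoint exponent $a=j/m$, that is $\|D^j u\|_{L^p}\leq C\|D^m u\|_{L^r}^{j/m}\|u\|_{L^q}^{1-j/m}$. Interpolating this pure case against the first-order estimate of the previous stage and recombining the factors with Young's inequality then covers the full admissible range $j/m\leq a\leq1$.

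The main obstacle will be the borderline exponents. When $m-j-\frac{n}{r}$ is a nonnegative integer the choice $a=1$ forces $p$ to the critical value at which the Sobolev embedding $W^{m-j,r}\hookrightarrow L^\infty$ just fails, so the inequality can hold only for $a<1$ there; the H\"older splittings in the second and third stages must therefore be arranged never to invoke this critical exponent. Comparable care is needed at the endpoints $p=\infty$, $r=\infty$, $q=\infty$, and in the bookkeeping that checks that every intermediate exponent generated by the induction remains in the admissible range $1\leq p\leq\infty$.
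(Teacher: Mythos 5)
The paper does not prove this lemma at all: it is stated as a classical result and justified purely by citation to Henry and to Triebel, which is the standard practice for a tool of this kind in a research paper. Your proposal instead reconstructs Nirenberg's original argument from scratch: the fundamental-theorem-of-calculus/generalized-H\"older proof of the base estimate $\|u\|_{L^{n/(n-1)}}\leq \prod_i\|\partial_{x_i}u\|_{L^1}^{1/n}$, the substitution $v=|u|^\gamma$ plus H\"older to reach first-order exponents, the integration-by-parts intermediate-derivative inequality $\|Df\|_{L^p}^2\leq C\|D^2f\|_{L^r}\|f\|_{L^q}$ iterated to reach the endpoint $a=j/m$, and interpolation to cover $j/m\leq a\leq 1$; you also correctly identify why $a=1$ must be excluded when $m-j-n/r$ is a nonnegative integer. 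This is the right architecture, and it is essentially the proof found in the references the paper cites, so what you lose in brevity you gain in self-containedness. Two points of care if you carry it out: the recombination of the pure case $a=j/m$ with the Sobolev case $a=1$ should be done by H\"older (Lyapunov) interpolation of $L^p$ norms, not Young's inequality, since Young yields a sum rather than the multiplicative bound required; and the $v=|u|^\gamma$ stage as written only produces the one-parameter family of exponents tied to $q=(\gamma-1)r'$, so the general first-order statement still needs that same $L^p$-interpolation step before the induction on $m$ can begin. Neither issue undermines the plan.
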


\begin{lemma}\label{l2.9}
Assume that $u\in H^l(\R^n)\bigcap L^\infty$, then
	\begin{eqnarray}\label{2.3j}
	\|\La^l (u^{\theta+1})\|_{L^2}\leq C\|u\|^\theta_{L^\infty}\|\Lambda^lu\|_{L^2},\forall\, \theta>0,\theta\in \mathbb{Z}.
	\end{eqnarray}

\end{lemma}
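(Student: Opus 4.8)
The plan is to argue by induction on the integer $\theta$, using the fractional Leibniz estimate of Lemma \ref{l2.1} to peel off one factor of $u$ at each step. The one auxiliary fact I would record first is the elementary identity $\|u^k\|_{L^\infty}=\|u\|_{L^\infty}^k$, which is what converts the $L^\infty$ factors generated by Lemma \ref{l2.1} into the powers of $\|u\|_{L^\infty}$ appearing on the right-hand side of \eqref{2.3j}. Throughout I would use the single Hölder triple $p=\infty$, $q=r=2$, which is admissible since $\frac12=\frac1\infty+\frac12$.

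For the base case $\theta=1$, I would apply Lemma \ref{l2.1} with $g=h=u$ and this triple, giving directly $\|\La^l(u^2)\|_{L^2}\leq 2C\|u\|_{L^\infty}\|\La^l u\|_{L^2}$, which is \eqref{2.3j} for $\theta=1$. For the inductive step, assume \eqref{2.3j} holds with $\theta$ replaced by $\theta-1$. Writing $u^{\theta+1}=u\cdot u^{\theta}$ and applying Lemma \ref{l2.1} with $g=u$ and $h=u^{\theta}$ yields
\[
\|\La^l(u^{\theta+1})\|_{L^2}\leq C\bigl(\|u\|_{L^\infty}\|\La^l(u^{\theta})\|_{L^2}+\|\La^l u\|_{L^2}\,\|u^{\theta}\|_{L^\infty}\bigr).
\]
The second term equals $\|u\|_{L^\infty}^{\theta}\|\La^l u\|_{L^2}$ by the identity above, while the first term is bounded by $C\|u\|_{L^\infty}\cdot\|u\|_{L^\infty}^{\theta-1}\|\La^l u\|_{L^2}$ via the induction hypothesis. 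Adding these gives \eqref{2.3j}, completing the induction.

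The computation is routine, so I do not expect a genuine obstacle; the only point deserving a word of care is verifying that each intermediate power $u^{\theta}$ lies in the space $W^{l,2}(\R^n)\cap L^\infty(\R^n)$ that Lemma \ref{l2.1} requires. This is automatic under the hypothesis $u\in H^l(\R^n)\cap L^\infty$: the bound $\|u^{\theta}\|_{L^\infty}=\|u\|_{L^\infty}^{\theta}<\infty$ handles the $L^\infty$ membership, and finiteness of $\|\La^l(u^{\theta})\|_{L^2}$ is precisely what the preceding induction step supplies, so Lemma \ref{l2.1} may be invoked legitimately at every stage. The constant at worst doubles at each of the finitely many steps (since $\theta$ is a fixed positive integer) and is absorbed into the final $C$.
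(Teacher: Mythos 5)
Your proposal is correct and follows essentially the same route as the paper: induction on the integer $\theta$, with the base case and the inductive step both handled by the bilinear estimate of Lemma \ref{l2.1} applied with the H\"older triple $p=\infty$, $q=r=2$, and the identity $\|u^{\theta}\|_{L^\infty}=\|u\|_{L^\infty}^{\theta}$ converting the resulting factors into the stated power. The only difference is cosmetic bookkeeping of the induction index, plus your (welcome but routine) remark that each intermediate power $u^{\theta}$ lies in the space required to invoke Lemma \ref{l2.1}.
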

\begin{proof}
From Lemma \ref{l2.1}, we have (\ref{2.3j}) for $\theta=1$.
If  (\ref{2.3j}) holds  for $\theta=k$, then
 \begin{eqnarray}
\|\La ^l(u^ku)\|_{L^2}\leq C\|u\|_{L^\infty}^k\|\La^l u\|_{L^2}.
\end{eqnarray}
Then for $\theta=k+1$, \begin{eqnarray}
\|\La ^l(u^{k+1}u)\|_{L^2}\leq C(\|u\|_{L^\infty}^{k+1}\|\La^l u\|_{L^2}+\|\Lambda^l u^{k+1}\|_{L^2}\|u\|_{L^\infty})
\leq c\|u\|^{k+1}_{L^\infty}\|\Lambda^l u\|_{L^2}.
\end{eqnarray}
Using an induction argument, we have (\ref{2.3j})  for the general case.
\end{proof}
\section{Decay estimates for the linearized system}
In this section, we study the decay property of solutions to the linearized equation
\begin{eqnarray}\label{2.1}
\begin{cases}
u_t-m\Delta u_t+(-\Delta )^{\alpha}u=0,\,\,& x\in\R^n,\,t>0,\\
u(0,x)=u_0(x),\,\,&x\in\R^n.
\end{cases}
\end{eqnarray}
Applying the Fourier transform to (\ref{2.1}),  we arrive at the expression \[\hat{u}(t,\xi)=e^{-\frac{|\xi|^{2\alpha}}{1+m|\xi|^2}t}\hat{u}_0(\xi).\]
Define the Green function of the equation (\ref{2.1}) as \[G(x,t)=\mathcal{F}^{-1}e^{-\frac{|\xi|^{2\alpha}}{1+m|\xi|^2}t}.\]

Next we are going to obtain some properties of the Green function $G(x,t)$. In order to use the composition  method of long wave and short wave in Section 4. We need to obtain the estimates the long wave (high frequency) part and the short wave (low frequency) part of $G(x,t)$ respectively.
Let \begin{eqnarray}\label{j3.3}
\chi(\xi)=\begin{cases}
1,&|\xi|\leq R,\\ 
0,&|\xi|\geq 2R 
\end{cases}
\end{eqnarray}
be a smooth cut-off function for some fixed constant $0<R<1$.
Let \begin{eqnarray}\label{3.4jj}
\begin{split}
\hat G_L(\xi,t)=\chi(\xi)\hat G(\xi,t),\,\,\,G_L(x,t)=\chi(D)G(x,t),\\
\hat G_H(\xi,t)=(1-\chi(\xi))\hat G(\xi,t),G_H(x,t)=(1-\chi(D)) G(x,t)
\end{split}
\end{eqnarray}
 where $\chi(D)$ is the operator with the symbol $\chi(\xi)$, $G_H(x,t)$ is the long wave part of $G(x,t)$ and $G_L(x,t)$ is the short wave part of $G(x,t)$.


\begin{lemma}\label{l3.1}
If $\alpha>0,\,l>0$,  there exists a  constant $C>0$, such that 
\begin{eqnarray}
\|\Lambda ^l G_L(x,t)*\phi\|_{L^2}\leq C(1+t)^{-\frac{l}{2\alpha}}\|\phi\|_{L^2},\,\,\forall \phi\in L^2(\R^n),
\end{eqnarray}
 \begin{eqnarray}
\|\Lambda ^l G_L(x,t)*\phi\|_{L^2}\leq C(1+t)^{-\frac{n}{4\alpha}-\frac{l}{2\alpha}}\|\phi\|_{L^1},\,\,\forall \phi\in L^1(\R^n) .\end{eqnarray}
\end{lemma}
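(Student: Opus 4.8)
The plan is to pass to the frequency side via Plancherel and to exploit that on the support of $\chi$ the full symbol $\frac{|\xi|^{2\alpha}}{1+m|\xi|^2}$ is comparable to the pure fractional-diffusion symbol $|\xi|^{2\alpha}$. First I would record the elementary lower bound on the low-frequency region: since $\chi$ is supported in $\{|\xi|\leq 2R\}$ with $0<R<1$ fixed, the denominator obeys $1\leq 1+m|\xi|^2\leq 1+4mR^2$, so with $c_0:=(1+4mR^2)^{-1}>0$ one has $\frac{|\xi|^{2\alpha}}{1+m|\xi|^2}\geq c_0|\xi|^{2\alpha}$ there, and hence $|\hat G_L(\xi,t)|\leq\chi(\xi)e^{-c_0|\xi|^{2\alpha}t}$. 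Everything then reduces to estimating the fractional heat-type multiplier $|\xi|^l e^{-c_0|\xi|^{2\alpha}t}$.

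For the first estimate I would use Plancherel to write
\[\|\Lambda^l G_L(\cdot,t)*\phi\|_{L^2}^2=\int_{\R^n}|\xi|^{2l}\chi(\xi)^2 e^{-\frac{2|\xi|^{2\alpha}}{1+m|\xi|^2}t}|\hat\phi(\xi)|^2\,d\xi\leq\sup_{|\xi|\leq 2R}\bigl(|\xi|^{2l}e^{-2c_0|\xi|^{2\alpha}t}\bigr)\,\|\phi\|_{L^2}^2,\]
and then bound the supremum by a single-variable optimization in $r=|\xi|$: the function $r^{2l}e^{-2c_0 r^{2\alpha}t}$ has its maximum at $r^{2\alpha}=l/(2\alpha c_0 t)$, where it equals a constant times $t^{-l/\alpha}$. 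Combining this with the trivial bound $|\xi|^{2l}\leq(2R)^{2l}$ (which handles small $t$, where the maximizer lies outside $\{|\xi|\leq 2R\}$) gives $\sup_{|\xi|\leq 2R}|\xi|^{2l}e^{-2c_0|\xi|^{2\alpha}t}\leq C(1+t)^{-l/\alpha}$, and taking square roots yields the claim.

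For the second estimate I would instead pull the $L^1$ datum out through $\|\hat\phi\|_{L^\infty}\leq C\|\phi\|_{L^1}$, leaving
\[\|\Lambda^l G_L(\cdot,t)*\phi\|_{L^2}^2\leq C\|\phi\|_{L^1}^2\int_{|\xi|\leq 2R}|\xi|^{2l}e^{-2c_0|\xi|^{2\alpha}t}\,d\xi,\]
and then evaluate the radial integral. Passing to polar coordinates and substituting $u=2c_0 t\,r^{2\alpha}$ turns it into a Gamma integral and gives $\int_{\R^n}|\xi|^{2l}e^{-2c_0|\xi|^{2\alpha}t}\,d\xi=C\,t^{-\frac{n}{2\alpha}-\frac{l}{\alpha}}$; for small $t$ the integral over the bounded set $\{|\xi|\leq 2R\}$ is simply dominated by $\int_{|\xi|\leq 2R}|\xi|^{2l}\,d\xi<\infty$, so the two regimes combine to $C(1+t)^{-\frac{n}{2\alpha}-\frac{l}{\alpha}}$, and the square root delivers the stated rate.

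I do not expect a genuine obstacle. Once the comparison $\frac{|\xi|^{2\alpha}}{1+m|\xi|^2}\geq c_0|\xi|^{2\alpha}$ is in place, the two bounds are exactly the classical $L^2$ and $L^1$--$L^2$ decay estimates for fractional heat flow, and the only points needing care are the single-variable optimization, the change of variables reducing the radial integral to a $\Gamma$-factor, and the routine splitting into $t\leq 1$ and $t\geq 1$ that replaces the pure power $t$ by $1+t$. The one structural feature worth emphasizing is that the restriction $R<1$ is precisely what keeps the denominator $1+m|\xi|^2$ bounded on $\mathrm{supp}\,\chi$, so that $G_L$ behaves like the kernel of $\partial_t+c_0(-\Delta)^\alpha$ with no loss from the pseudo-parabolic term.
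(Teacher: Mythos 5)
Your proposal is correct and follows essentially the same route as the paper: bound the symbol on $\mathrm{supp}\,\chi$ by $e^{-c|\xi|^{2\alpha}t}$, then use Plancherel with the sup of $|\xi|^{2l}e^{-c|\xi|^{2\alpha}t}$ for the $L^2$ bound and $\|\hat\phi\|_{L^\infty}\leq C\|\phi\|_{L^1}$ plus the radial integral for the $L^1$--$L^2$ bound, splitting $t\leq 1$ and $t>1$ to obtain the $(1+t)$ rates. Your explicit optimization and Gamma-integral computations just make concrete the scaling argument the paper uses for the same suprema and integrals.
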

\begin{proof}
For $|\xi|\leq R$, we have
\begin{eqnarray}
\frac{|\xi|^{2\alpha}}{1+m|\xi|^2}\geq c|\xi|^{2\alpha},
\end{eqnarray}
then
\begin{eqnarray}\label{3.7109}
e^{-\frac{|\xi|^{2\alpha}}{1+m|\xi|^2}t}\leq e^{-c|\xi|^{2\alpha }t}.
\end{eqnarray} 
On the other hand, since  
\[ \sup_{|\xi|\leq R}|\xi|^{2l}e^{-c|\xi|^{2\alpha}t}\leq C (1+t)^{-l/\alpha},\,\text{ for } t\leq 1,\]
and 
\[\sup_{|\xi|\leq R}|\xi|^{2l}e^{-c|\xi|^{2\alpha}t}\leq C\sup_{\eta\in \R^n }|\eta|^{2l}e^{-c|\eta|^{2\alpha}}t^{-l/\alpha}\leq C (1+t)^{-l/\alpha},\,\text{ for } t> 1,\]
it follows that
\begin{eqnarray}\label{3.9jjjj}
\sup_{|\xi|\leq R}|\xi|^{2l}e^{-c|\xi|^{2\alpha}t}\leq C (1+t)^{-l/\alpha},\,\text{ for } t>0.
\end{eqnarray}
Using (\ref{3.7109}) and (\ref{3.9jjjj}), we can obtain that
\begin{eqnarray}
\begin{split}
\|\Lambda^l G_L(x,t)*\phi\|_{L^2}^2 &\leq \int_{|\xi|\leq R}|\xi|^{2l} e^{-c|\xi|^{2\alpha}t}|\hat \phi|^2d\xi\\&
\leq \|\hat \phi\|_{L^\infty}^2 \int_{|\xi|\leq R}|\xi|^{2l} e^{-c|\xi|^{2\alpha}t}d\xi
\\&\leq c(1+t)^{-\frac{n}{2\alpha}-\frac{l}{\alpha}}\|\phi\|_{L^1}^2.
\end{split}
\end{eqnarray}

Finally, from (\ref{3.9jjjj}), we can also have
\begin{eqnarray}
\begin{split}
\|\Lambda^l G_L(x,t)*\phi\|_{L^2}^2 &\leq \int_{|\xi|\leq R}|\xi|^{2l} e^{-c|\xi|^{2\alpha}t}|\hat \phi|^2d\xi\\&
\leq \|\hat \phi\|_{L^2}^2 \sup_{|\xi|\leq R}|\xi|^{2l}e^{-c|\xi|^{2\alpha}t}
\\&\leq c(1+t)^{-\frac{l}{\alpha}}\|\phi\|_{L^2}^2.
\end{split}
\end{eqnarray}
\end{proof}
\begin{lemma}\label{l3.2}
If $\alpha \geq 1$, then
\begin{eqnarray}
\|\Lambda^l G_H(x,t)*\phi\|_{L^q}\leq e^{-Ct}\|\La ^{l+n(\frac{1}{r}-\frac{1}{q})+\delta} \phi\|_{L^r}, \text{ for }\, 1\leq r<2,q>2,
\end{eqnarray} 
\begin{eqnarray}
\|\Lambda^l G_H(x,t)*\phi\|_{L^2}\leq e^{-Ct}\|\La ^{l} \phi\|_{L^2}, 
\end{eqnarray}
where $\delta>0$ is any arbitrary constant, $l>0$.

If $0<\alpha<1$, then 
\begin{eqnarray}\label{3.8jj}
\|\Lambda^l G_H(x,t)*\phi\|_{L^q}\leq (1+t)^{-\frac{\beta}{2\bar{\alpha}}}\|\La ^{\beta+l+n(\frac{1}{r}-\frac{1}{q})+\delta} \phi\|_{L^r}, \text{ for } 1\leq r<2,q>2,
\end{eqnarray} 
\begin{eqnarray}\label{3.9j}
\|\Lambda^l G_H(x,t)*\phi\|_{L^2}\leq (1+t)^{-\frac{\beta}{2\bar{\alpha}}}\|\La ^{\beta+l} \phi\|_{L^2}, 
\end{eqnarray} 
where $\delta>0$ is any arbitrary constant, $\beta>0$, $l>0$, $\bar \alpha=1-\alpha$.
\end{lemma}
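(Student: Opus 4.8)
The plan is to reduce everything to pointwise estimates on the Fourier multiplier $\hat G_H(\xi,t)=(1-\chi(\xi))e^{-\frac{|\xi|^{2\alpha}}{1+m|\xi|^2}t}$, which is supported in $|\xi|\geq R$, and to read off the two regimes from the size of the symbol $p(\xi):=\frac{|\xi|^{2\alpha}}{1+m|\xi|^2}$ on that set. First I would record two elementary lower bounds. For $\alpha\geq 1$ the symbol stays bounded below: $p(\xi)\geq c_0>0$ for all $|\xi|\geq R$ (it is positive at $|\xi|=R$ and tends to $1/m$ as $|\xi|\to\infty$ when $\alpha=1$, or to $+\infty$ when $\alpha>1$), so $\hat G_H(\xi,t)\leq e^{-c_0 t}$ on its support. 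For $0<\alpha<1$, using $1+m|\xi|^2\leq C_m|\xi|^2$ on $|\xi|\geq R$ gives only the weaker bound $p(\xi)\geq c|\xi|^{2\alpha-2}=c|\xi|^{-2\bar{\alpha}}$, hence $\hat G_H(\xi,t)\leq e^{-c|\xi|^{-2\bar{\alpha}}t}$; this symbol decaying to zero at high frequency is precisely the weakly dissipative, regularity-loss feature.

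The $L^2$ inequalities then follow directly from Plancherel. For $\alpha\geq1$ I would pull the uniform factor $e^{-c_0 t}$ out of $\int_{|\xi|\geq R}|\xi|^{2l}|\hat G_H|^2|\hat\phi|^2\,d\xi$ and bound the remainder by $\|\La^l\phi\|_{L^2}^2$. For $0<\alpha<1$ I would instead split $|\xi|^{2l}=|\xi|^{-2\beta}|\xi|^{2(\beta+l)}$, absorb $|\xi|^{2(\beta+l)}|\hat\phi|^2$ into $\|\La^{\beta+l}\phi\|_{L^2}^2$, and control the leftover by $\sup_{|\xi|\geq R}|\xi|^{-2\beta}e^{-2c|\xi|^{-2\bar{\alpha}}t}$. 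The key elementary bound is $\sup_{|\xi|\geq R}|\xi|^{-2\beta}e^{-c|\xi|^{-2\bar{\alpha}}t}\leq C(1+t)^{-\beta/\bar{\alpha}}$, proved by the substitution $s=|\xi|^{-2\bar{\alpha}}\in(0,R^{-2\bar{\alpha}}]$, which reduces it to $\sup_{0<s\leq S_0}s^{\beta/\bar{\alpha}}e^{-cst}$: bounded for $t\leq1$ and of order $t^{-\beta/\bar{\alpha}}$ for $t>1$. Taking square roots gives $(1+t)^{-\beta/(2\bar{\alpha})}$, matching (\ref{3.9j}).

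For the $L^q$ estimates with $1\leq r<2<q$ I would argue on the Fourier side. By Hausdorff–Young for the inverse transform (valid since $q'<2$), $\|\La^lG_H*\phi\|_{L^q}\leq C\||\xi|^l\hat G_H\hat\phi\|_{L^{q'}}$; then Hölder with $\frac1{q'}=\frac1a+\frac1{r'}$, i.e. $\frac1a=\frac1r-\frac1q$, splits this as $\||\xi|^l\hat G_H\,w^{-1}\|_{L^a}\,\|w\hat\phi\|_{L^{r'}}$ with the weight $w=|\xi|^{\beta+l+n(1/r-1/q)+\delta}$ (take $\beta=0$ when $\alpha\geq1$). Hausdorff–Young for $\phi\in L^r$ bounds $\|w\hat\phi\|_{L^{r'}}$ by $\|\La^{\beta+l+n(1/r-1/q)+\delta}\phi\|_{L^r}$, the desired right-hand side. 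The remaining multiplier factor $|\xi|^l\hat G_H w^{-1}=|\xi|^{-\beta-n(1/r-1/q)-\delta}\hat G_H$ has its $L^a$ norm computed from $\int_{|\xi|\geq R}|\xi|^{-a\beta-an(1/r-1/q)-a\delta}e^{-ap(\xi)t}\,d\xi$, where the algebraic identity $a\,n(1/r-1/q)=n$ turns the power into the borderline $|\xi|^{-n-a\beta-a\delta}$: the arbitrary $\delta>0$ is exactly what makes $|\xi|^{-n-a\delta}$ integrable at infinity, while $|\xi|^{-a\beta}$ carries the time decay. Inserting the symbol bounds from the first paragraph then yields $e^{-c_0t}$ for $\alpha\geq1$ and, via the same supremum computation, $(1+t)^{-\beta/(2\bar{\alpha})}$ for $0<\alpha<1$, matching (\ref{3.8jj}).

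I expect the main obstacle to be the bookkeeping in the $L^q$ estimates: choosing the Hölder exponent $a=(1/r-1/q)^{-1}$ so that the weight exponent $n(1/r-1/q)$ collapses to the critical power $|\xi|^{-n}$, and then cleanly separating the role of $\delta$ (integrability at infinity) from the role of $\beta$ (the decay rate, extracted through the $\sup_{|\xi|\geq R}|\xi|^{-\gamma}e^{-c|\xi|^{-2\bar{\alpha}}t}$ bound). The $L^2$ statements and the exponential decay in the $\alpha\geq1$ case are comparatively routine once the two symbol bounds are in hand.
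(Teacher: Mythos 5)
Your proof is correct and takes essentially the same route as the paper's: the same two lower bounds on the symbol $\frac{|\xi|^{2\alpha}}{1+m|\xi|^2}$ over the support of $1-\chi$, Plancherel plus the elementary bound $\sup_{|\xi|\geq R}|\xi|^{-\beta}e^{-c|\xi|^{-2\bar{\alpha}}t}\leq C(1+t)^{-\beta/(2\bar{\alpha})}$ for the $L^2$ estimates, and Hausdorff--Young combined with H\"older at exponent $a=(\tfrac1r-\tfrac1q)^{-1}$ for the $L^q$ estimates, with $\delta>0$ playing exactly the paper's role of making the borderline power $|\xi|^{-n-a\delta}$ integrable. No changes needed.
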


\begin{proof} In the high frequency region for $|\xi|\geq 2r$, we have
\begin{eqnarray}
\frac{|\xi|^{2\alpha}}{1+m|\xi|^2}\geq c|\xi|^{2\alpha-2},
\end{eqnarray}
where $c$ is a positive constant depending on $m$.

1) For $\alpha\geq 1$,
\begin{eqnarray}
e^{-\frac{|\xi|^{2\alpha}}{1+m|\xi|^2}t}\leq e^{-ct},
\end{eqnarray} 
where $c$ is a positive constant depending on $R,m$.
Then
\begin{eqnarray}\label{3.12j}
\begin{split}
\|\Lambda^l G_H*\phi\|_{L^q}&\leq c e^{-ct}\Bigl(\int _{|\xi|\geq 2R}\Bigl||\xi|^{l}\hat{\phi}(\xi)\Bigl|^{q'}d\xi\Bigl )^{\frac{1}{q'}}\\
&\leq ce^{-ct}\||\xi|^{-(n+\bar\delta)\frac{r'-q'}{r'q'}}\|_{L^{\frac{r'q'}{r'-q'}}(|\xi|\geq 2R)}\||\xi|^{l+(n+\bar\delta)\frac{r'-q'}{r'q'}}\hat{\phi}\|_{L^{r'}}
\\&\leq ce^{-ct}\|\La^{l+n(\frac{1}{r}-\frac{1}{q})+\delta}\phi\|_{L^r}
\end{split}
\end{eqnarray}
for $1\leq r\leq 2$, $ \frac{1}{r}+\frac{1}{r'}=1$,  $ \frac{1}{q}+\frac{1}{q'}=1$, ${\delta}=(\frac{1}{r}-\frac{1}{q})\bar{\delta}$, $\bar\delta>0$ is an arbitrary constant.
In addition, if $q=2,r=2$, 
\begin{eqnarray}
\|\La^{l}G_H*\phi\|_{L^2}\leq e^{-ct}\||\xi|^l\hat{\phi}\|_{L^2}\leq e^{-ct}\|\La^l \phi\|_{L^2}.
\end{eqnarray}
2) For $0<\alpha <1$, from (\ref{3.9jjjj}), we have
\begin{eqnarray}
\begin{split}
\|\La^l G_H*\phi\|_{L^2} &\leq \||\xi|^l e^{-\frac{|\xi|^{2\alpha}}{1+|\xi|^2}t}\hat{\phi}\|_{L^{2}(|\xi|\geq 2R)}
\\&\leq  \||\xi|^l e^{-c|\xi|^{-2(1-\alpha)}t}\hat{\phi}\|_{L^{2}(|\xi|\geq 2R)}
\\&\leq 
\sup_{|\xi|\geq 2R}|\xi|^{-\beta}e^{-c|\xi|^{-2\bar{\alpha}}t}\bigl(\int_{|\xi|\geq 2R}(|\xi|^{\beta+l}\hat{\phi})^{2} d\xi\Bigl)^{\frac{1}{2}}\\
&\leq (1+t)^{-\frac{\beta}{2\bar{\alpha}}}\||\xi|^{\beta+l}\hat{\phi}\|_{L^{2}(|\xi|\geq 2R)},
\end{split}
\end{eqnarray}
 then (\ref{3.9j}) is established.
Similar to (\ref{3.12j}), we have (\ref{3.8jj}).
\end{proof}

\section{The global existence and  time-decay of the solutions}

Firstly, we consider the estimate for the short wave part $u_H(t)$ and the long wave part $u_L(t)$.  Based on the Fourier transform and (\ref{j3.3}), as in (\ref{3.4jj}), we can define the long wave and short wave decomposition $\bigl(g_L(x,t),g_H(x,t)\bigl)$ for a function $g(x,t)$.\[g_L(x,t)=\chi(D)g(x,t),\,g_H(x)=(1-\chi(D))g(x,t),\]
where $\chi(x) $ is the operator with the symbol $\chi(\xi)$. The long wave part $g_L(x,t)$ and the short wave part $g_H(x,t)$ satisfy 
\begin{eqnarray}\label{4.1jj}
\|\La^lg_L\|_{L^2}\leq C(l)\|g_L\|_{L^2},\,\|g_H\|_{L^2}\leq C(l)\|\La^lg_H\|_{L^2}.
\end{eqnarray}
Obviously, the short wave part $g_H(x,t)$ satisfies a Poincare-like inequality.
Now the solution of (\ref{1.1}) can be divided into two parts by long wave and short wave decomposition:
\[u(x,t)=u_H(x,t)+u_L(x,t).\] 
Thus we can deal with $u_L$ and $ u_H$ respectively. (4.1) help us to have the estimate of lower order term $\|u\|_{L^2}$. Thus we can use the standard energy method to obtain the estimates of $u$.
In the following we give some estimates for the cases $\alpha\geq1$ and $0<\alpha<1$ respectively.
\subsection{a priori estimates for $\alpha\geq 1$}
Note that the Duhamel principle implies that the solution of (\ref{1.1}) satisfies the following integral equation
\begin{eqnarray}\label{3.2jj}
u(t)=G(x,t)*u_0+\int^t_0G(x,t-\tau)B^{-1}\bigl(u^{\theta+1}(\tau)\bigl)d\tau,
\end{eqnarray}
where $B=I-m\Delta$.

Now we estimate $u_L(x,t)$.
For $\alpha>0,$ from (\ref{3.2jj}), we have
\begin{eqnarray}\label{4.2j}
\begin{split}
u_L(x,t)&=G_L(x,t)*u_0+\int^t_0G_L(t-\tau)*B^{-1}\bigl(u^{\theta+1}\bigl)d\tau\\
&=G_L(x,t)*u_0+\int^\frac{t}{2}_0G_L(t-\tau)*B^{-1}\bigl(u^{\theta+1}\bigl)d\tau\\ &\,\,\,\,\,\,\,\,+\int^t_\frac{t}{2}G_L(t-\tau)*B^{-1}\bigl(u^{\theta+1}\bigl)d\tau
\\&=I_0+I_1+I_2.\end{split}
\end{eqnarray}

Define 
\begin{eqnarray}
M_1(t)^2=\sup_{0\leq l\leq s} \sup_{0\leq y\leq t}(1+y)^{\frac{n}{2\alpha}+\frac{l}{\alpha}}\|\Lambda^{l} u(y)\|^2_{L^2}.
\end{eqnarray}
\begin{proposition} 
	If $\alpha\geq 1, s>n/2,\, \theta>\frac{4\alpha}{n},\theta\in \z$, and $u_0\in L^1(\R^n)$,  it follows that 
	\begin{eqnarray}\label{4.28j}
	\|\La^l u_L\|\leq C (1+t)^{-\frac{n}{4\alpha}-\frac{l}{2\alpha}}\bigl( \|u_0\|_{L^1}+M_1(t)^{\theta+1} \bigl)\,\,\text{ for } 0\leq l\leq s. \end{eqnarray}
\end{proposition}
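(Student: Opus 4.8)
The plan is to bound the three pieces $I_0$, $I_1$, $I_2$ of the decomposition \eqref{4.2j} separately, using the two estimates for $G_L$ in Lemma \ref{l3.1}, the product estimate of Lemma \ref{l2.9}, and the Sobolev embedding $H^s\hookrightarrow L^\infty$ (valid for $s>\frac{n}{2}$). Throughout, the definition of $M_1(t)$ gives, for every $0\leq l\leq s$ and $0\leq\tau\leq t$,
\[\|\La^l u(\tau)\|_{L^2}\leq M_1(t)(1+\tau)^{-\frac{n}{4\alpha}-\frac{l}{2\alpha}},\]
and, since the slowest-decaying ($l=0$) term dominates $\|u\|_{H^s}$, that
\[\|u(\tau)\|_{L^\infty}\leq C\|u(\tau)\|_{H^s}\leq CM_1(t)(1+\tau)^{-\frac{n}{4\alpha}}.\]
I will also use that $B^{-1}=(I-m\Delta)^{-1}$ is bounded on both $L^1$ and $L^2$, since its symbol $(1+m|\xi|^2)^{-1}$ is bounded and its convolution kernel is an integrable Bessel potential.

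For the linear part $I_0=G_L*u_0$, the second estimate in Lemma \ref{l3.1} with $\phi=u_0$ gives at once $\|\La^l I_0\|_{L^2}\leq C(1+t)^{-\frac{n}{4\alpha}-\frac{l}{2\alpha}}\|u_0\|_{L^1}$, producing the $\|u_0\|_{L^1}$ term of \eqref{4.28j}. For $I_1$ we have $\tau\leq t/2$, hence $1+t-\tau\geq c(1+t)$, so the kernel decay can be pulled out of the integral; applying the $L^1$-estimate of Lemma \ref{l3.1} to $\phi=B^{-1}(u^{\theta+1})$ yields
\[\|\La^l I_1\|_{L^2}\leq C(1+t)^{-\frac{n}{4\alpha}-\frac{l}{2\alpha}}\int_0^{t/2}\|u^{\theta+1}(\tau)\|_{L^1}\,d\tau.\]
Bounding $\|u^{\theta+1}\|_{L^1}\leq\|u\|_{L^\infty}^{\theta-1}\|u\|_{L^2}^2\leq CM_1(t)^{\theta+1}(1+\tau)^{-\frac{n\theta}{4\alpha}-\frac{n}{4\alpha}}$ (here $\theta\geq1$ because $\theta\in\z$ and $\theta>0$), the remaining integral converges since $\theta>\frac{4\alpha}{n}$ forces the exponent below $-1$; thus $I_1$ contributes the required $M_1(t)^{\theta+1}(1+t)^{-\frac{n}{4\alpha}-\frac{l}{2\alpha}}$.

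The delicate piece is $I_2$, where $\tau\in[t/2,t]$ and the kernel $G_L(t-\tau)$ provides no time decay, so all decay must come from the nonlinearity. Commuting $\La^l$ and $B^{-1}$ through the convolution, using the $L^2$-boundedness part of the first estimate in Lemma \ref{l3.1} together with Lemma \ref{l2.9}, gives
\[\|\La^l I_2\|_{L^2}\leq C\int_{t/2}^t\|u(\tau)\|_{L^\infty}^{\theta}\|\La^l u(\tau)\|_{L^2}\,d\tau\leq CM_1(t)^{\theta+1}\int_{t/2}^t(1+\tau)^{-\frac{n\theta}{4\alpha}-\frac{n}{4\alpha}-\frac{l}{2\alpha}}\,d\tau.\]
Since the integrand is largest at $\tau=t/2$ and the window has length $t/2$, this is at most $CM_1(t)^{\theta+1}(1+t)^{1-\frac{n\theta}{4\alpha}-\frac{n}{4\alpha}-\frac{l}{2\alpha}}$, and the factor $(1+t)^{1-\frac{n\theta}{4\alpha}}$ stays bounded precisely because $\theta>\frac{4\alpha}{n}$. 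This is exactly where the main obstacle lies and where the hypothesis on $\theta$ is consumed: the loss of kernel decay near $\tau=t$ forces one to absorb the whole factor $t$ coming from the length of the integration interval into the decay generated by $\|u\|_{L^\infty}^{\theta}$, which is possible only under $\theta>\frac{4\alpha}{n}$. Summing $I_0+I_1+I_2$ then gives \eqref{4.28j}.
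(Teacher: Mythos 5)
Your proposal is correct and follows essentially the same route as the paper: the same splitting into $I_0,I_1,I_2$, the $L^1\to L^2$ decay of $G_L$ for $I_0$ and $I_1$ with the nonlinearity measured in $L^1$ via $\|u^{\theta+1}\|_{L^1}\leq \|u\|_{L^\infty}^{\theta-1}\|u\|_{L^2}^2$, and the $L^2$-boundedness of $G_L*$ combined with Lemma \ref{l2.9} for $I_2$, with the condition $\theta>\frac{4\alpha}{n}$ consumed exactly where you say it is. The only cosmetic differences are that you obtain the $L^\infty$ bound from the Sobolev embedding rather than the Gagliardo--Nirenberg interpolation used in the paper (both yield $\|u\|_{L^\infty}\leq CM_1(t)(1+\tau)^{-\frac{n}{4\alpha}}$), and that you make explicit the $L^1$/$L^2$ boundedness of $B^{-1}$, which the paper uses silently.
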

\begin{proof}
According to the assumptions $s>\frac{n}{2}$ and $0\leq l\leq s$, choosing $l >\frac{n}{2}$, it yields
\begin{eqnarray}
	\begin{split}
		&\|u\|_{L^\infty}\leq C \|u\|_{L^2}^{1-a}\|\La ^l u\|_{L^2}^a
		\leq C(1+t)^{-\frac{n}{4\alpha}}M_1(t),
	\end{split}
\end{eqnarray}
where $0<a<1$.
And from Lemma \ref{l2.9}, Lemma \ref{l3.1}, and $\frac{n\theta}{4\alpha}>1$, it gives
\begin{eqnarray}\label{4.4jj}
\begin{split}
\|\La^lI_0\|_{L^2}=\|\La ^lG_L(x,t)*u_0\|_{L^2}\leq C(1+t)^{-\frac{n}{4\alpha}-\frac{l}{2\alpha}}\|u_0\|_{L^1},
\end{split}
\end{eqnarray}
\begin{eqnarray}\label{4.5jj}
\begin{split}
\|\La^lI_1\|_{L^2}&\leq C \int^\frac{t}{2}_0 \|\La^l G_L(x,t-\tau)*B^{-1}(u^{\theta+1})\|_{L^2}d\tau
\\
&\leq C \int^\frac{t}{2}_0 (1+t-\tau)^{-\frac{n}{4\alpha}-\frac{l}{2\alpha}}\|u\|^2_{L^2}\|u\|^{\theta-1}_{L^\infty}d\tau\\&
\leq C (1+\frac{t}{2})^{-\frac{n}{4\alpha}-\frac{l}{2\alpha}}\int^{\frac t2}_0 (1+\tau)^{-\frac{(\theta+1)n}{4\alpha}}M_1(t/2)^{\theta+1}d\tau
\\&\leq C(1+t)^{-\frac{n}{4\alpha}-\frac{l}{2\alpha}}M_1(t)^{\theta+1}, \end{split}
\end{eqnarray}
\begin{eqnarray}\label{4.6jj}
\begin{split}
\|\La^lI_2\|_{L^2}&\leq C \int_\frac{t}{2}^t \| G_L(x,t-\tau)*(\La^l\bigl(u^{\theta+1}\bigl))\|_{L^2}d\tau\\
&\leq C \int^t_\frac{t}{2} \|u\|_{L^\infty}^\theta \|\La ^l u\|_{L^2}d\tau
\\&\leq C \int^t_\frac{t}{2}(1+\tau)^{-\frac{n}{4\alpha}(\theta+1)-\frac{l}{2\alpha}}M_1(t)^{\theta+1} d\tau
\\&\leq C (1+t)^{-\frac{n}{4\alpha}-\frac{l}{2\alpha}}\int^t_\frac{t}{2}(1+\tau)^{-\frac{n\theta}{4\alpha}}M_1(t)^{\theta+1} d\tau
\\&\leq C(1+t)^{-\frac{n}{4\alpha}-\frac{l}{2\alpha}}M_1(t)^{\theta+1}. 
 \end{split}\end{eqnarray}
From (\ref{4.4jj}), (\ref{4.5jj}) and (\ref{4.6jj}), we arrive at (\ref{4.28j}).
\end{proof}
Now we give some estimates about $u_H(x,t)$.
\begin{proposition} If $\alpha\geq 1, s>n/2,\, \theta>\frac{4\alpha}{n},\theta\in \z$, assume $E_0=\|u_0\|_{H^s}+\|u_0\|_{L^1}$ for $u_0\in H^s(\R^n)\bigcap L^1(\R^n)$ sufficiently small, it follows that 
	\begin{eqnarray}\label{4.14jj}
	&\|\La ^l u_H\|^2_{L^2} \leq c(1+t)^{-\frac{n}{2\alpha}-\frac{l}{\alpha}}\Bigl( \|u_0\|^2_{H^l}+\|u_0\|_{L^1}^2+M_1(t)^{2(\theta+1)}\Bigl), \\ \label{4.21-1}
	&	M_1(t)\leq c(\|u_0\|_{H^s}+\|u_0\|_{L^1}).
	\end{eqnarray}
\end{proposition}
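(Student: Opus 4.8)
The plan is to repeat the Duhamel argument of the previous proposition, but projected onto high frequencies, exploiting that for $\alpha\ge 1$ the high-frequency part of the Green function decays \emph{exponentially} (Lemma~\ref{l3.2}). Writing $u_H=(1-\chi(D))u$, the integral equation (\ref{3.2jj}) gives
\[ u_H(t)=G_H(x,t)*u_0+\int_0^t G_H(x,t-\tau)*B^{-1}\bigl(u^{\theta+1}(\tau)\bigr)\,d\tau , \]
so $\|\La^l u_H\|_{L^2}$ splits into a linear term and a nonlinear integral. The nonlinear term I will control by Lemma~\ref{l2.9}, $\|\La^l(u^{\theta+1})\|_{L^2}\le C\|u\|_{L^\infty}^{\theta}\|\La^l u\|_{L^2}$, together with the Gagliardo--Nirenberg bound $\|u\|_{L^\infty}\le C\|u\|_{L^2}^{1-a}\|\La^l u\|_{L^2}^{a}\le C(1+\tau)^{-n/4\alpha}M_1(\tau)$, valid because $s>\tfrac n2$ lets us fix some $l>\tfrac n2$, exactly as in the estimate of $u_L$.

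For the linear term the second inequality of Lemma~\ref{l3.2} gives $\|\La^l G_H*u_0\|_{L^2}\le e^{-Ct}\|\La^l u_0\|_{L^2}\le e^{-Ct}\|u_0\|_{H^l}$, and since $e^{-2Ct}\le C(1+t)^{-n/2\alpha-l/\alpha}$ this already yields the stated rate with the factor $\|u_0\|_{H^l}^2$; crucially no $H^{l+1}$ regularity is needed, which is exactly why the estimate closes all the way up to $l=s$. For the nonlinear integral I again use Lemma~\ref{l3.2}, the $L^2$-boundedness of $B^{-1}=(I-m\Delta)^{-1}$ (symbol $\le 1$, commuting with $\La^l$), and the bounds above, to reach the integrand $\le C e^{-C(t-\tau)}(1+\tau)^{-\frac{n(\theta+1)}{4\alpha}-\frac{l}{2\alpha}}M_1(t)^{\theta+1}$. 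Splitting the $\tau$-integral at $t/2$: on $[t/2,t]$ the decaying factor is pulled out as $(1+t)^{-\frac{n(\theta+1)}{4\alpha}-\frac{l}{2\alpha}}$ while $\int e^{-C(t-\tau)}\,d\tau$ stays bounded, and on $[0,t/2]$ the kernel satisfies $e^{-C(t-\tau)}\le e^{-Ct/2}$, making that piece exponentially small. Since $\theta>0$ both pieces are dominated by $(1+t)^{-\frac{n}{4\alpha}-\frac{l}{2\alpha}}M_1(t)^{\theta+1}$. Squaring and using $2ab\le a^2+b^2$ gives (\ref{4.14jj}); the summand $\|u_0\|_{L^1}^2$ is simply added on the right (it only weakens the bound) for uniformity with the next step. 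Note that, thanks to the exponential kernel, this part needs no restriction on $\theta$; the hypothesis $\theta>\tfrac{4\alpha}{n}$ is forced only by the low-frequency integral in the previous proposition.

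To obtain (\ref{4.21-1}) I combine both propositions. Taking square roots in (\ref{4.14jj}) and adding the $u_L$-bound (\ref{4.28j}), for every $0\le l\le s$,
\[ \|\La^l u(t)\|_{L^2}\le \|\La^l u_L\|_{L^2}+\|\La^l u_H\|_{L^2}\le C(1+t)^{-\frac{n}{4\alpha}-\frac{l}{2\alpha}}\bigl(E_0+M_1(t)^{\theta+1}\bigr), \]
using $\|u_0\|_{H^l}\le\|u_0\|_{H^s}\le E_0$. Multiplying by $(1+t)^{\frac{n}{4\alpha}+\frac{l}{2\alpha}}$ and taking the supremum over $0\le l\le s$ and $0\le y\le t$ turns this into the closed inequality $M_1(t)\le C\bigl(E_0+M_1(t)^{\theta+1}\bigr)$.

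The main obstacle is the final bootstrap. Since $M_1$ is continuous and nondecreasing with $M_1(0)\le CE_0$ and $\theta+1>1$, a standard continuity argument shows that if $E_0\le\delta_0$ is small enough then $M_1(t)\le 2CE_0$ for all $t\ge 0$, which is (\ref{4.21-1}). The genuinely delicate points are the absorption of $M_1^{\theta+1}$ into $M_1$, which forces the smallness of $E_0$ and makes the solution global rather than local, and the use of $s>\tfrac n2$ to secure the $L^\infty$-decay of $u$. The time-integral bookkeeping itself is routine here precisely because the high-frequency kernel decays exponentially---the structural advantage of the regularity-gain regime $\alpha\ge1$, in contrast with $0<\alpha<1$ where only polynomial decay is available and a time-weighted energy method becomes necessary.
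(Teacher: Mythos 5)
Your proof is correct, but it follows a genuinely different route from the paper's. You run the Duhamel representation \eqref{3.2jj} through the high-frequency projection and invoke the exponential decay $\|\La^l G_H(t)*\phi\|_{L^2}\le e^{-Ct}\|\La^l\phi\|_{L^2}$ of Lemma \ref{l3.2}, so that the linear part and the nonlinear integral $\int_0^t e^{-C(t-\tau)}\|\La^l u^{\theta+1}\|_{L^2}\,d\tau$ are estimated directly and uniformly in $0\le l\le s$. The paper instead applies $1-\chi(D)$ to the equation itself and runs an energy method: it multiplies \eqref{4.8jj} by $u_H$ (and by $\La^{l-1}u_H$ for $l\ge1$), uses the Poincar\'e-like inequality \eqref{4.1jj}--\eqref{4.13jj} on high frequencies (this is where $\alpha\ge1$, i.e.\ $\bar\alpha\le0$, enters) to turn the dissipation into full $H^l$ control, absorbs the nonlinearity under the a priori assumption $M_1(t)<\va$, and then applies Gronwall with $\|\La^l u_L\|_{L^2}^2$ as the forcing term. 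The two arguments rest on the same structural fact --- $\frac{|\xi|^{2\alpha}}{1+m|\xi|^2}\ge c>0$ on $|\xi|\ge 2R$ when $\alpha\ge1$ --- but deploy it differently. Your Duhamel version is cleaner and more uniform: it treats all $l\le s$ in one stroke, needs no smallness of $M_1$ until the final bootstrap, and makes transparent that $\theta>\frac{4\alpha}{n}$ is only needed for the low-frequency part. The paper's energy version has the advantage that its differential inequalities \eqref{4.9jj} and \eqref{4.12jj} are exactly what gets recycled, with time weights, in the regularity-loss regime $0<\alpha<1$ of Section 4.2, where the high-frequency kernel no longer decays exponentially and a pure Duhamel argument would lose derivatives. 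The final closing step $M_1(t)\le C(E_0+M_1(t)^{\theta+1})$ and the continuity argument are the same in both.
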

\begin{proof}
Applying $1-\chi(D)$ to  both sides of  (\ref{1.1}), it gives
\begin{eqnarray}\label{4.8jj}
\begin{cases}
\frac{\partial u_H}{\partial t}-m\frac{\partial\Delta u_H}{\partial t}+(-\Delta )^{\alpha}u_H= (1-\chi(D))u^{\theta+1},\,\,&x\in\R^n ,t>0\\
u_H(0,x)=(1-\chi(D))u_{0}(x),\,\,&x\in\R^n.
\end{cases}
\end{eqnarray}

Assume \begin{eqnarray}\label{4.15j}
M_1(t)<\va,
\end{eqnarray}then $\|u\|_{L^\infty}<c\va$.

Multiplying (\ref{4.8jj}) by $u_H$,  and integrating over $\R^n$, it yields
\begin{eqnarray}\label{4.9jj}
\frac{1}{2}\frac{d}{dt}\bigl(\|u_H\|^2_{L^2}+m\|\La u_H\|^2_{L^2})+\|\La^\alpha u_H\|^2_{L^2}\leq \int_{\R^n}(1-\chi(D))u^{\theta+1}u_Hdx.
\end{eqnarray}
Since \begin{eqnarray}\label{4.13j}
\begin{split}
\int_{\R^n}(1-\chi(D))u^{\theta+1}u_Hdx&\leq \|(1-\chi(D))u^{\theta+1}\|_{L^2}
\|u_H\|_{L^2}\\
&\leq c \|(1-\chi(D))\La u^{\theta+1}\|_{L^2}
\|\La u_H\|_{L^2}
\\&\leq c\|\La u^{\theta+1}\|_{L^2}
\|\La u_H\|_{L^2}
\\&\leq c\|u\|_{L^\infty}^{\theta}(\|\La u_H\|_{L^2}+\|\La u_L\|_{L^2})\|\La u_H\|_{L^2}
\\&\leq c\va^{\theta}(\|\La u_H\|_{L^2}^2+\|\La u_L\|_{L^2}^2),
\end{split}
\end{eqnarray}
choosing $\va<<1$, from (\ref{4.1jj}), (\ref{4.9jj}) and (\ref{4.13j}) there exists a constant $C>0$ such that 
\[
\frac{d}{dt}\bigl(\|u_H\|^2_{L^2}+m\|\La u_H\|_{L^2}^2)+C(\| u_H\|^2_{L^2}+m\|\La u_H\|_{L^2}^2)\leq c\|\La u_L\|^2_{L^2}
.\]
Then by Gronwall's inequality,  (\ref{4.28j}) and (\ref{4.15j}), we have for $0\leq l\leq 1$,
\begin{eqnarray}
\begin{split}
&\|\La^lu_H\|^2_{L^2}\\ \leq & c\|\La u_H\|_{L^2}^2\\
\leq & ce^{-Ct}\|u_0\|^2_{H^1}+c\int^t_0 e^{-C(t-\tau)}\|\La u_L\|_{L^2}^2d\tau
\\\leq & ce^{-Ct}\|u_0\|^2_{H^1}+c\int^t_0 e^{-C(t-\tau)}(1+\tau)^{-\frac{n}{2\alpha}-\frac{1}{\alpha}}\Bigl(\|u_0\|_{L^1}^2+M_1(t)^{2(\theta+1)}\Bigl)d\tau
\\\leq & c(1+t)^{-\frac{n}{2\alpha}-\frac{1}{\alpha}}\Bigl( \|u_0\|^2_{H^1}+\|u_0\|_{L^1}^2+M_1(t)^{2(\theta+1)}\Bigl) .
\end{split}
\end{eqnarray}
Applying $\La^{l-1}$ to (\ref{4.8jj}),\, and multiplying by $\La^{l-1}u_H$, integrating over $\R^n$ for $l\geq 1$, from Lemma 2.3, we have
\begin{eqnarray}\label{4.12jj}
\begin{split}
&\frac{d}{dt}\bigl(\|\La^{l-1}u_H\|^2_{L^2}+m\|\La^{l} u_H\|^2_{L^2})+\|\La^{l-\bar{\alpha}} u_H\|^2_{L^2}\\\leq & \int_{\R^n}\La^{l-1}( (1-\chi(D))u^{\theta+1})\La^{l-1}u_Hdx\\\leq & \|\La^{l-1}( (1-\chi(D))u^{\theta+1})\|_{L^2}\|\La^{l-1}u_H\|_{L^2}
\\
\leq & \|\La^{l}( u^{\theta+1})\|_{L^2}\|\La^{l}u_H\|_{L^2}\\
\leq & C\|u\|_{L^\infty}^\theta\|\Lambda^{l}u_H\|_{L^2}\|\Lambda^{l}u\|_{L^2}
\\\leq & C\va^\theta (\|\La^{l}u_H\|^2_{L^2}+\|\Lambda^{l}u_L\|^2_{L^2}).
\end{split}
\end{eqnarray}
Since $\bar{\alpha}<0$ by the Poincare-like inequality, we have \begin{eqnarray}\label{4.13jj}
c\|\La^{l-1}u_H\|_{L^2}\leq \|u_H\|_{H^l}\leq C\|\La^{l}u_H\|_{L^2}\leq \|\La^{l-\bar{\alpha}} u_H\|_{L^2}.
\end{eqnarray}
From (\ref{4.12jj}) and (\ref{4.13jj}), we have
\begin{eqnarray}
\begin{split}
&\frac{d}{dt}\bigl(\|\La^{l-1}u_H\|^2_{L^2}+m\|\La^{l} u_H\|^2_{L^2})+C(\| \Lambda^{l-1}u_H\|^2_{L^2}+m\|\La^{l} u_H\|^2_{L^2}))\\ \leq & C\va^\theta (\|\La^{l}u_L\|^2_{L^2}+\|\La^{l}u_H\|^2_{L^2}).
\end{split}
\end{eqnarray}
For $\va<<1$, thus there exists a constant $C>0$ such that 
\begin{eqnarray}
\begin{split}
\frac{d}{dt}\bigl(\|\La^{l-1}u_H\|^2_{L^2}+m\| \La^l u_H\|^2_{L^2})+C(\| \Lambda^{l-1}u_H\|^2_{L^2}+m\| \Lambda^lu_H\|^2_{L^2})\leq c\|\La^l u_L\|^2_{L^2}.
\end{split}
\end{eqnarray}
By Gronwall's 
inequality, (\ref{4.28j}) and (\ref{4.13jj}), we have 
\begin{eqnarray}
\begin{split}
&\|\La ^l u_H\|^2_{L^2}\\ \leq & e^{-Ct}\|u_0\|^2_{H^l}+c\int^t_0 e^{-C(t-\tau)}(1+\tau)^{-\frac{n}{2\alpha}-
	\frac{l}{\alpha}}\Bigl(\|u_0\|_{L^1}^2+M_1(t)^{2(\theta+1)}\Bigl)d\tau
\\ \leq& c(1+t)^{-\frac{n}{2\alpha}-\frac{l}{\alpha}}\Bigl( \|u_0\|^2_{H^l}+\|u_0\|_{L^1}^2+M_1(t)^{2(\theta+1)}\Bigl). 
\end{split}
\end{eqnarray}

Now we have derived  a priori estimates (\ref{4.14jj}) for the case $\alpha\geq 1$.
Thus
\begin{eqnarray}
M_1(t)\leq C \bigl( \|u_0\|_{L^1}+\|u_0\|_{H^s}+M_1(t)^{\theta+1} \bigl)
\end{eqnarray}
under the assumption (\ref{4.15j}). Therefore, by the continuity argument, for $\|u_0\|_{H^s}+\|u_0\|_{L^1}$ sufficiently small,  it implies (\ref{4.21-1}) for $0\leq l\leq s,s>\frac{n}{2}$. 

\end{proof}
\subsection{A priori estimates for  $0<\alpha<1$}

 For the case that $0<\alpha<1$. Define 
 \begin{eqnarray}\label{5.1}
 \begin{split}
 E(t)^2= \sum^{[\frac{s}{2\bar{\alpha}}]-1}_{j=0}\sup_{0\leq y\leq t}(1+y)^{j\bar{\alpha}-\frac{1}{2}\bar{\alpha}}\|\Lambda^{j\bar{\alpha}}u_H(y)\|^2_{H^{s-2j\bar{\alpha}}}, \\ L(t)^2=\sum^{[\frac{s}{2\bar{\alpha}}]-1}_{j=0}\int^t_0 (1+\tau)^{j\bar{\alpha}-\frac{1}{2}\bar{\alpha}}\|\Lambda ^{j\bar{\alpha}} u_H(\tau)\|^2_{H^{s-2j\bar\alpha}}d\tau,
 \end{split}
 \end{eqnarray}
 
 \begin{eqnarray}\label{4.23jjj}
 M_2(t)^2=\sup_{0\leq \beta\leq l} \sup_{0\leq y\leq t}(1+y)^{\frac{n}{2\alpha}+\frac{\beta}{\alpha}}\|\Lambda^{\beta} u(y)\|^2_{L^2}.
 \end{eqnarray}
  \begin{proposition}
  	If $0<\alpha< 1,  \theta>\frac{4\alpha}{n},\theta\in \z$,
  	$([\frac{s}{2\bar \alpha}]-1)\bar\alpha >{\frac{n}{2{\alpha}}+\frac 12\bar\alpha}$
  	and $u_0\in L^1(\R^n)$,  it follows that 
  	\begin{eqnarray}\label{4.28}
  	\|\La^l u_L\|_{L^2} \leq C (1+t)^{-\frac{n}{4\alpha}-\frac{l}{2\alpha}}\bigl( \|u_0\|_{L^1}+(M_2(t)+E(t))^{\theta+1} \bigl). \end{eqnarray} 
  \end{proposition}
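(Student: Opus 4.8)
The plan is to carry out the proof in close parallel with that of Proposition 4.1, exploiting the fact that Lemma \ref{l3.1} (the long-wave estimates) is valid for every $\alpha>0$ and is completely insensitive to the regularity-loss/regularity-gain dichotomy. Indeed, the distinction between the two regimes lives entirely in the high-frequency region, which never enters the bound for $u_L$. Starting from the Duhamel representation (\ref{3.2jj}) and the low-wave decomposition (\ref{4.2j}), I would again write $u_L=I_0+I_1+I_2$ with $I_0=G_L(\cdot,t)*u_0$, $I_1=\int_0^{t/2}G_L(t-\tau)*B^{-1}(u^{\theta+1})\,d\tau$, and $I_2=\int_{t/2}^{t}G_L(t-\tau)*B^{-1}(u^{\theta+1})\,d\tau$. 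The only substantive change relative to Proposition 4.1 is that the controlling quantity $M_1(t)$ must be replaced by $M_2(t)+E(t)$: in the regime $0<\alpha<1$ the norm $M_2$ only sees derivatives up to a finite order below $s$, and the missing high-order information is supplied by the time-weighted energy $E(t)$.

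For $I_0$ I would apply the $L^1\to L^2$ estimate of Lemma \ref{l3.1} directly to recover $\|\La^l I_0\|_{L^2}\leq C(1+t)^{-\frac{n}{4\alpha}-\frac{l}{2\alpha}}\|u_0\|_{L^1}$, exactly as in (\ref{4.4jj}). For $I_1$, on $[0,t/2]$ I would again use the $L^1\to L^2$ estimate, bound $\|B^{-1}(u^{\theta+1})\|_{L^1}\leq C\|u^{\theta+1}\|_{L^1}\leq C\|u\|_{L^2}^2\|u\|_{L^\infty}^{\theta-1}$, and then use $1+t-\tau\geq 1+t/2$ to extract the slowly-varying factor $(1+t-\tau)^{-\frac{n}{4\alpha}-\frac{l}{2\alpha}}$ from the integral; the residual $\int_0^{t/2}(1+\tau)^{-\frac{(\theta+1)n}{4\alpha}}\,d\tau$ converges because $\frac{n\theta}{4\alpha}>1$. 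For $I_2$, on $[t/2,t]$ I would move $\La^l$ onto the nonlinearity, use the $L^2\to L^2$ estimate of Lemma \ref{l3.1}, and invoke Lemma \ref{l2.9} to get $\|\La^l(u^{\theta+1})\|_{L^2}\leq C\|u\|_{L^\infty}^{\theta}\|\La^l u\|_{L^2}$; after pulling out $(1+t)^{-\frac{n}{4\alpha}-\frac{l}{2\alpha}}$ the remaining $\int_{t/2}^{t}(1+\tau)^{-\frac{n\theta}{4\alpha}}\,d\tau$ is once more controlled by $\frac{n\theta}{4\alpha}>1$. Summing $I_0,I_1,I_2$ yields the asserted bound, with $M_2(t)+E(t)$ in place of $M_1(t)$.

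The main obstacle, and the only place where the regularity-loss structure genuinely matters, is establishing the decay estimate $\|u\|_{L^\infty}\leq C(1+\tau)^{-\frac{n}{4\alpha}}(M_2(t)+E(t))$ used above (the control of $\|\La^l u\|_{L^2}$ being immediate from $M_2$ for $l$ in the admissible range). I would split $u=u_L+u_H$. The low-frequency part is harmless: since $\|\La^k u_L\|_{L^2}\leq C\|u_L\|_{L^2}$ for every $k$, Gagliardo--Nirenberg (Lemma \ref{l2.4}) gives $\|u_L\|_{L^\infty}\leq C\|u_L\|_{L^2}\leq C(1+\tau)^{-\frac{n}{4\alpha}}M_2(t)$. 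The high-frequency part is the delicate one: here I would use a Sobolev embedding $\|u_H\|_{L^\infty}\leq C\|u_H\|_{H^m}$ with $m>n/2$ together with the definition (\ref{5.1}) of $E(t)$, reading off from the $j$-th summand that $\|\La^{j\bar\alpha}u_H\|_{H^{s-2j\bar\alpha}}\leq(1+\tau)^{-\frac12(j-\frac12)\bar\alpha}E(t)$ and noting that, for high frequencies, $\|\La^{j\bar\alpha}u_H\|_{H^{s-2j\bar\alpha}}$ is comparable to $\|u_H\|_{H^{s-j\bar\alpha}}$.

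Choosing the top index $j=[\frac{s}{2\bar\alpha}]-1$, the hypothesis $([\frac{s}{2\bar\alpha}]-1)\bar\alpha>\frac{n}{2\alpha}+\frac12\bar\alpha$ is precisely the condition ensuring that the decay exponent $\frac12(j-\frac12)\bar\alpha$ is at least $\frac{n}{4\alpha}$, so that $\|u_H\|_{L^\infty}$ decays at least as fast as $(1+\tau)^{-\frac{n}{4\alpha}}$; simultaneously the effective regularity $s-j\bar\alpha$ must stay above $n/2$ for the embedding to be legitimate, which is where the extra regularity built into the statement of Theorem 1.2 is consumed. This balancing of decay against available regularity is the step that must be checked with care, and it is exactly what forces the appearance of $E(t)$ alongside $M_2(t)$. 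Once this $L^\infty$ bound is in hand, the integrals $I_1$ and $I_2$ close verbatim as in Proposition 4.1.
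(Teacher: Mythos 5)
Your proposal follows essentially the same route as the paper's proof: the same Duhamel decomposition $u_L=I_0+I_1+I_2$, the same application of Lemma \ref{l3.1} (valid for all $\alpha>0$) with the $L^1\to L^2$ bound on $[0,t/2]$ and the $L^2\to L^2$ bound plus Lemma \ref{l2.9} on $[t/2,t]$, the same use of $\frac{n\theta}{4\alpha}>1$ to close the time integrals, and the hypothesis $([\frac{s}{2\bar\alpha}]-1)\bar\alpha>\frac{n}{2\alpha}+\frac12\bar\alpha$ consumed exactly where you place it, namely in establishing $\|u\|_{L^\infty}\leq C(1+\tau)^{-\frac{n}{4\alpha}}(M_2(t)+E(t))$. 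The only (immaterial) difference is that the paper derives this $L^\infty$ bound by Gagliardo--Nirenberg interpolation $\|u\|_{L^\infty}\leq C\|u\|_{L^2}^{1-a}\|\Lambda^{j\bar\alpha}u\|_{L^2}^{a}$ applied to the full solution and then splits $\Lambda^{j\bar\alpha}u$ into its low- and high-frequency parts, whereas you split $u$ first and use Bernstein for $u_L$ and Sobolev embedding for $u_H$; both yield the same decay under the same condition.
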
\begin{proof}
 
Since $([\frac{s}{2\bar \alpha}]-1)\bar\alpha >\frac{n}{2{\alpha}}+\frac 12\bar\alpha>\frac n 2$, from Lemma \ref{l2.9} and the definition of $E(t)$ and $M_2(t)$, we have
 \begin{eqnarray}\label{4.21}
 \begin{split}
 \|u\|_{L^\infty}&\leq C \|u\|_{L^2}^{1-a}\|\La ^{j\bar{\alpha}}  u\|_{L^2}^a \\ &\leq  C  \|u\|_{L^2}^{1-a}\bigl(\|\La ^{j\bar{\alpha}} u_H\|_{L^2}^a+
 \|\La ^{j\bar{\alpha}} u_L\|_{L^2}^a\bigl)\\&
 \leq  C  \|u\|_{L^2}^{1-a}\bigl(\|\La ^{j\bar{\alpha}} u_H\|_{L^2}^a+
 \|u_L\|_{L^2}^a\bigl)\\&
 \leq C (1+t)^{-\frac{n}{4\alpha}(1-a)}M_2(t)^{1-a}[(1+t)^{(-\frac{j\bar\alpha}{2}+\frac{1}{4}\bar\alpha)a}E(t)^a+C(1+t)^{-\frac{n}{4\alpha}a}M_2(t)^a]\\&
 \leq C(1+t)^{-\frac{n}{4\alpha}}(M_2(t)+E(t)),
 \end{split}
 \end{eqnarray}
 where $0<a<1$ and $([\frac{s}{2\bar \alpha}]-1)\bar\alpha\geq j\bar{\alpha} >\max\{\frac{n}{2{\alpha}}+\frac 12\bar\alpha, \frac n 2\}$. 
Similar to $\alpha\geq 1$, we need to estimate $I_0,I_1,I_2$ in (\ref{4.2j}) for $0<\alpha<1$. From Lemma \ref{l3.1} and (\ref{4.21}), it gives
 \begin{eqnarray}\label{4.22}
 \begin{split}
 \|\La^lI_0\|_{L^2}\leq C\|\La ^lG_L(x,t)\|_{L^2}\|u_0\|_{L^1}\leq C(1+t)^{-\frac{n}{4\alpha}-\frac{l}{2\alpha}}\|u_0\|_{L^1},
 \end{split}
 \end{eqnarray}
 \begin{eqnarray}\label{4.23}
 \begin{split}
 \|\La^lI_1\|_{L^2}&\leq C \int^\frac{t}{2}_0 \|\La^l G_L(x,t-\tau)*(u^{\theta+1})\|_{L^2}d\tau\\
 &\leq C \int^\frac{t}{2}_0 (1+t-\tau)^{-\frac{n}{4\alpha}-\frac{l}{2\alpha}}\|u\|^2_{L^2}\|u\|^{\theta-1}_{L^\infty}d\tau
 \\&\leq C(1+t)^{-\frac{n}{4\alpha}-\frac{l}{2\alpha}}M_2(t)^2(M_2(t)+E(t))^{\theta-1}, \end{split}
 \end{eqnarray}
 \begin{eqnarray}\label{4.24}
 \begin{split}
 \|\La^lI_2\|_{L^2}&\leq C \int_\frac{t}{2}^t \| G_L(x,t-\tau)*(\La^l\bigl(u^{\theta+1}\bigl))\|_{L^2}d\tau\\
 &\leq C\int^t_\frac{t}{2}\|\La^l\bigl( u^{\theta+1}\bigl)\|_{L^2}d\tau\\
 &\leq C \int^t_\frac{t}{2} \|\La ^{l}u\|_{L^2}\|u\|^\theta_{L^\infty}d\tau
 \\&\leq C \int^t_\frac{t}{2} (1+\tau)^{-\frac{n}{4\alpha}(\theta+1)-\frac{l}{2\alpha}}M_2(t)(M_2(t)+E(t))^\theta d\tau
 \\&\leq C(1+t)^{-\frac{n}{4\alpha}-\frac{l}{2\alpha}}M_2(t)(M_2(t)+E(t))^\theta. 
 \end{split}\end{eqnarray}
 From (\ref{4.22}), (\ref{4.23}) and (\ref{4.24}), it follows (\ref{4.28}).
\end{proof}
 Now we give some estimates about $u_H(x,t)$ for $0< \alpha<1$.
 \begin{proposition}If $0<\alpha<1$, $[\frac{s}{2\bar{\alpha}}]\geq\frac{n}{2\alpha}+\frac{3}{2}\bar{\alpha}$, $\theta>\frac{4\alpha}{n},\theta\in \z$.
 	Then for 
 	$0\leq l\leq N_0 $ and $E_0$ is sufficiently small, it follows that
 	\begin{eqnarray}\label{4.34}
 	\|\La^lu_H\|_{L^2}\leq C (1+t)^{-\frac{n}{4\alpha}-\frac{l}{2\alpha}}\bigl( \|u_0\|_{H^s}+(M_2(t)+E(t))^{(\theta+1)}\bigl),
 	\end{eqnarray} 
 	\begin{eqnarray}\label{4.38jjj}
 	\|\La^lu\|_{L^2}\leq C (1+t)^{-\frac{n}{4\alpha}-\frac{l}{2\alpha}}\bigl( \|u_0\|_{H^s}+\|u_0\|_{L^1})(M_2(t)+E(t))^{(\theta+1)}\bigl),
 	\end{eqnarray}
 	with $\bar \alpha=1-\alpha$, $ N_0=\alpha \min\{s-\frac{n}{2\alpha}\bar{\alpha}, ([\frac{s}{2\bar{\alpha}}]-1)\bar{\alpha}-\frac{n}{2\alpha}\bar{\alpha}+2\}$ and $E_0=\|u_0\|_{H^s}+\|u_0\|_{L^1}$.
 \end{proposition}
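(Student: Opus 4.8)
The plan is to prove (\ref{4.34}) by a time-weighted energy estimate for the high-frequency part $u_H$, and then to deduce (\ref{4.38jjj}) from $u=u_L+u_H$ together with the already-established long-wave bound (\ref{4.28}). First I would apply $1-\chi(D)$ to (\ref{1.1}) to obtain the equation for $u_H$, exactly of the form (\ref{4.8jj}). For a fixed level $j$ with $0\le j\le[\frac{s}{2\bar\alpha}]-1$, I would apply $\La^\beta$ for $\beta$ ranging over $[j\bar\alpha,\,s-j\bar\alpha]$, pair with $\La^\beta u_H$, integrate over $\R^n$ and sum in $\beta$, producing the energy identity
\[
\frac{d}{dt}\Big(\|\La^{j\bar\alpha}u_H\|^2_{H^{s-2j\bar\alpha}}+m\|\La^{j\bar\alpha+1}u_H\|^2_{H^{s-2j\bar\alpha}}\Big)+2\|\La^{j\bar\alpha+\alpha}u_H\|^2_{H^{s-2j\bar\alpha}}=\text{(nonlinear)}.
\]
The crucial structural input, coming from the proof of Lemma \ref{l3.2}, is that on $|\xi|\ge 2R$ the symbol satisfies $|\xi|^{2\alpha}/(1+m|\xi|^2)\ge c|\xi|^{2\alpha-2}=c|\xi|^{-2\bar\alpha}$, so the dissipation gains only $\alpha=1-\bar\alpha$ derivatives relative to the energy; this is precisely the weakly dissipative (regularity-loss) structure that forces the hierarchy.

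Next I would multiply the level-$j$ identity by the weight $(1+t)^{j\bar\alpha-\frac12\bar\alpha}$ and sum over $j$, the goal being a Lyapunov inequality $\frac{d}{dt}E(t)^2+cL(t)^2\le(\text{weight-derivative terms})+(\text{nonlinear})$, with $E(t),L(t)$ as in (\ref{5.1}). The weight-derivative term at level $j$ equals $(j\bar\alpha-\tfrac12\bar\alpha)(1+t)^{j\bar\alpha-\frac12\bar\alpha-1}$ times the full energy norm at level $j$; an index count shows its top derivative coincides with that of the dissipation norm at level $j-1$, while its time weight is smaller by the factor $(1+t)^{-\alpha}$. Using the Poincar\'e-type inequality (\ref{4.1jj}) for $u_H$ to dominate the remaining lower derivatives of the level-$j$ energy norm by the level-$(j-1)$ dissipation norm, each weight-derivative term is absorbed into the corresponding term of $L(t)^2$ (for large $t$ thanks to the gain $(1+t)^{-\alpha}$, the bounded time interval being handled crudely), while the $j=0$ term carries a favourable sign. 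This telescoping is the step I expect to be the main obstacle, since it requires the exponents $j\bar\alpha-\frac12\bar\alpha$ and the regularity indices $s-2j\bar\alpha$ to be aligned so that the whole sum collapses; that alignment is exactly what fixes the definitions (\ref{5.1}).

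For the nonlinear terms I would invoke Lemma \ref{l2.9} to write $\|\La^{\sigma}(u^{\theta+1})\|_{L^2}\le C\|u\|_{L^\infty}^\theta\|\La^\sigma u\|_{L^2}$, insert the $L^\infty$ decay $\|u\|_{L^\infty}\le C(1+t)^{-n/4\alpha}(M_2(t)+E(t))$ from (\ref{4.21}), and split $u=u_L+u_H$ so that the $u_H$ factor feeds into $L(t)^2$ and the $u_L$ factor is controlled by (\ref{4.28}); the hypotheses $\theta>\frac{4\alpha}{n}$ and $[\frac{s}{2\bar\alpha}]\ge\frac{n}{2\alpha}+\frac32\bar\alpha$ guarantee that the resulting time integrals converge and that the dissipation suffices to cover the loss. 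Integrating the Lyapunov inequality then yields $E(t)\le C(\|u_0\|_{H^s}+(M_2(t)+E(t))^{\theta+1})$.

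Finally, for the pointwise decay (\ref{4.34}) I would use the Duhamel representation (\ref{3.2jj}), namely $u_H(t)=G_H(t)*u_0+\int_0^t G_H(t-\tau)*B^{-1}(u^{\theta+1})\,d\tau$, split the integral at $t/2$, and apply the high-frequency estimate (\ref{3.9j}) of Lemma \ref{l3.2} with $\beta=\bar\alpha(\frac{n}{2\alpha}+\frac{l}{\alpha})$, so that the rate $\beta/2\bar\alpha$ matches $\frac{n}{4\alpha}+\frac{l}{2\alpha}$. The requirement that the data norm $\|\La^{\beta+l}u_0\|_{L^2}$ be finite forces $\beta+l\le s$, i.e. $l\le\alpha(s-\frac{n}{2\alpha}\bar\alpha)$, which is the first entry of the minimum; controlling the extra derivatives of $u^{\theta+1}$ through the finite regularity budget encoded in $E(t)$ and $M_2(t)$, whose number of usable levels is exactly $[\frac{s}{2\bar\alpha}]-1$, costs the second entry, and together these give the admissible range $0\le l\le N_0$. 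Estimate (\ref{4.38jjj}) is then immediate upon combining (\ref{4.34}) with (\ref{4.28}) and $u=u_L+u_H$.
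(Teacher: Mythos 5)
Your derivation of (\ref{4.34}) --- the Duhamel formula for $u_H$ split at $t/2$, the high-frequency estimate (\ref{3.9j}) of Lemma \ref{l3.2} with $\beta=\bar{\alpha}(\frac{n}{2\alpha}+\frac{l}{\alpha})$, the constraint $\beta+l\leq s$ producing the first entry of the minimum in $N_0$ and the regularity budget $([\frac{s}{2\bar{\alpha}}]-1)\bar{\alpha}$ producing the second, with the nonlinearity controlled via Lemma \ref{l2.9}, (\ref{4.21}) and the a priori quantities $M_2(t)$, $E(t)$ --- is exactly the paper's proof, and (\ref{4.38jjj}) follows by combining with (\ref{4.28}) just as you say. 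The time-weighted energy hierarchy that occupies the first part of your proposal is not needed for this proposition, since $E(t)$ enters the right-hand side of (\ref{4.34}) only as an a priori quantity; that hierarchy is precisely the content of the paper's subsequent proposition and is established there separately.
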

 \begin{proof}
 \begin{eqnarray}
\begin{split}
u_H(x,t)&=G_H(x,t)*u_0+\int^t_0G_H(t-\tau)*B^{-1}\bigl(u^{\theta+1}\bigl)d\tau\\
&=G_H(x,t)*u_0+\int^\frac{t}{2}_0G_H(t-\tau)*B^{-1}\bigl(u^{\theta+1}\bigl)d\tau\\&\,\,\,\,\,\,+\int^t_\frac{t}{2}G_H(t-\tau)*B^{-1}\bigl(u^{\theta+1}\bigl)d\tau
\\&=J_0+J_1+J_2.\end{split}
\end{eqnarray}
In (\ref{3.9j}),  let $\beta_1=\bar{\alpha}(\frac{n}{2\alpha}+\frac{l}{\alpha})$ and $s\geq l+\beta_1$, then it holds
\begin{eqnarray}\label{4.17jj}
\|\La^l J_0\|_{L^2}\leq  (1+t)^{-\frac{\beta_1}{2\bar{\alpha}}}\|\La ^{\beta_1+l} u_0\|_{L^2}\leq (1+t)^{-\frac{n}{4\alpha}-\frac{l}{2\alpha}}\| u_0\|_{H^s}.
\end{eqnarray}
 For $\beta_1+l-2\leq 0$, from Lemma 3.2 and (\ref{4.21}), similarly we have
 \begin{eqnarray}\label{4.18jj}
 \begin{split}
 \|\La^lJ_1\|_{L^2}&\leq C \int^\frac{t}{2}_0 \| G_H(x,t-\tau)*\La^l\bigl(B^{-1}(u^{\theta+1})\bigl)\|_{L^2}d\tau\\&\leq C\int^\frac{t}{2}_0(1+t-\tau)^{-\frac{\beta_1}{2\bar{\alpha}}}\||\xi|^{\beta_1+l-2}\bigl(\mathcal{F}(u^{\theta+1})\bigl)\|_{L^2(|\xi|\geq 2R)}d\tau\\
 &\leq C \int^\frac{t}{2}_0 (1+t-\tau)^{-\frac{n}{4\alpha}-\frac{l}{2\alpha}} \|u\|_{L^\infty}^\theta \| u\|_{L^2}d\tau
 \\&\leq C(1+t/2)^{-\frac{n}{4\alpha}-\frac{l}{2\alpha}} \int^\frac{t}{2}_0 (1+\tau)^{\frac{(\theta+1)n}{4\alpha}} M_2(t)(M_2(t)+E)^{\theta}d\tau
 \\
 &\leq C(1+t)^{-\frac{n}{4\alpha}-\frac{l}{2\alpha}}M_2(t)(M_2(t)+E(t))^{\theta}. \end{split}
 \end{eqnarray}
 For  $0<\beta_1+l-2\leq ([\frac{s}{2\bar{\alpha}}]-1)\bar{\alpha}$,
\begin{eqnarray}\label{4.32}
\begin{split}
\|\La^lJ_1\|_{L^2}&\leq C \int^\frac{t}{2}_0 \| G_H(x,t-\tau)*\La^l\bigl(B^{-1}(u^{\theta+1})\bigl)\|_{L^2}d\tau\\
&\leq C\int^\frac{t}{2}_0(1+t-\tau)^{-\frac{\beta_1}{2\bar{\alpha}}}\|\La^{\beta_1+l-2}\bigl(u^{\theta+1}\bigl)\|_{L^2}d\tau\\
&\leq  C(1+\frac{t}{2})^{-\frac{n}{4\alpha}-\frac{l}{2\alpha}}\int^\frac{t}{2}_0  \|u\|_{L^\infty}^\theta (\|\La^{l+\beta_1-2} u_L\|_{L^2}+\|\La^{l+\beta_1-2} u_H\|_{L^2})d\tau\\
&\leq  C(1+t)^{-\frac{n}{4\alpha}-\frac{l}{2\alpha}}\int^\frac{t}{2}_0 \|u\|_{L^\infty}^\theta (\| u_L\|_{L^2}+\|\La^{l+\beta_1-2} u_H\|_{L^2})d\tau
\\&\leq C(1+t)^{-\frac{n}{4\alpha}-\frac{l}{2\alpha}}(M_2(t)+E(t))^{\theta+1}, \end{split}
\end{eqnarray}
\begin{eqnarray}\label{4.19jjj}
\begin{split}
\|\La^lJ_2\|_{L^2}&\leq C \int_\frac{t}{2}^t \| G_H(x,t-\tau)*(\La^l\bigl(B^{-1}u^{\theta+1}\bigl))\|_{L^2}d\tau\\
&\leq C\int^t_\frac{t}{2}\|\La^l\bigl(B^{-1}
 u^{\theta+1}\bigl)\|_{L^2}d\tau\\
 &\leq C\int^t_\frac{t}{2}\|\La^l\bigl(
 u^{\theta+1}\bigl)\|_{L^2}d\tau\\
&\leq C \int^t_\frac{t}{2} \|u\|_{L^\infty}^\theta \|\La ^l u\|_{L^2}d\tau
\\&\leq C \int^t_\frac{t}{2} (1+\tau)^{-\frac{n}{4\alpha}(\theta+1)-\frac{l}{2\alpha}}M_2(t)(M_2(t)+E(t))^\theta d\tau
\\&\leq C(1+t)^{-\frac{n}{4\alpha}-\frac{l}{2\alpha}}M_2(t)(M_2(t)+E(t))^\theta .
\end{split}\end{eqnarray}
From (\ref{4.17jj})-(\ref{4.19jjj}), then it gives 
\begin{eqnarray}\label{4.34}
\|\La^lu_H\|_{L^2}\leq C (1+t)^{-\frac{n}{4\alpha}-\frac{l}{2\alpha}}\bigl( \|u_0\|_{H^s}+(M_2(t)+E(t))^{(\theta+1)}\bigl).
\end{eqnarray}
Thus by (\ref{4.28}) and (\ref{4.34}),  then we have (\ref{4.38jjj})

\end{proof}
\begin{proposition}
	For $0<\alpha<1$, we claim that  for $0\leq k\leq [\frac{s}{2\bar\alpha}]-1$,
	\begin{eqnarray}\label{a}
	\begin{split}
	&(1+t)^{(k-\frac{1}{2})\bar{\alpha}}\|\Lambda^{k\bar{\alpha}}u_H\|_{H^{s-2k\bar\alpha}}^2+c\int^t_0 (1+\tau)^{(k-\frac{1}{2})\bar{\alpha}}\|\Lambda^{(k+1)\bar{\alpha}}u_H\|_{H^{s-2(k+1)\bar\alpha}}^2d\tau\\   \leq & c\|u_0\|_{H^s}^2+c(M_2(t)^2+E(t)^2)^{\theta}(M_2(t)^2+L(t)^2).
	\end{split}
	\end{eqnarray}
\end{proposition}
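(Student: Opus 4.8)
The plan is to run a time-weighted energy estimate on the high-frequency system \eqref{4.8jj} one level $k$ at a time and to close the whole hierarchy by induction on $k$, exploiting the regularity-loss relation between the dissipation at level $k$ and the energy at level $k+1$. First I would fix $k$, apply $\La^{k\bar\alpha}$ to \eqref{4.8jj}, and take the $H^{s-2k\bar\alpha}$ inner product with $\La^{k\bar\alpha}u_H$. Since $u_t-m\Delta u_t$ carries the energy multiplier $1+m|\xi|^2$ and $(-\Delta)^\alpha$ the multiplier $|\xi|^{2\alpha}$, this produces the identity
\[
\tfrac12\tfrac{d}{dt}\Bigl(\|\La^{k\bar\alpha}u_H\|_{H^{s-2k\bar\alpha}}^2+m\|\La^{1+k\bar\alpha}u_H\|_{H^{s-2k\bar\alpha}}^2\Bigr)+\|\La^{\alpha+k\bar\alpha}u_H\|_{H^{s-2k\bar\alpha}}^2=R_k(t),
\]
where $R_k(t)=\bigl(\La^{k\bar\alpha}[(1-\chi(D))u^{\theta+1}],\La^{k\bar\alpha}u_H\bigr)_{H^{s-2k\bar\alpha}}$. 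Throughout I abbreviate the quadratic form on the left by $\mathcal E_k(t)$, which by the Poincar\'e-like inequality \eqref{4.1jj} is comparable to $\|\La^{k\bar\alpha}u_H\|_{H^{s-2k\bar\alpha}}^2$ on the high-frequency support $|\xi|\ge R$.

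The decisive step is a Plancherel computation on $|\xi|\ge R$. Using $\alpha+\bar\alpha=1$, the ratio of the integrand of $\|\La^{\alpha+k\bar\alpha}u_H\|_{H^{s-2k\bar\alpha}}^2$ to that of $\|\La^{(k+1)\bar\alpha}u_H\|_{H^{s-2(k+1)\bar\alpha}}^2$ equals $|\xi|^{2\alpha-2\bar\alpha}(1+|\xi|^2)^{2\bar\alpha}\ge c|\xi|^2\ge cR^2$, so the dissipation controls the \emph{next} level's energy; conversely $\|\La^{k\bar\alpha}u_H\|_{H^{s-2k\bar\alpha}}\le C\|\La^{\alpha+k\bar\alpha}u_H\|_{H^{s-2k\bar\alpha}}$ since $\alpha\ge0$. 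The second bound lets me handle $R_k$: estimating by Cauchy--Schwarz, trading the $u_H$-factor up into the dissipation, and then applying Young's inequality together with Lemma \ref{l2.9} and Lemma \ref{l2.1} (and the boundedness of $1-\chi(D)$) gives
\[
|R_k|\le \tfrac12\|\La^{\alpha+k\bar\alpha}u_H\|_{H^{s-2k\bar\alpha}}^2+C\|u\|_{L^\infty}^{2\theta}\|\La^{k\bar\alpha}u\|_{H^{s-2k\bar\alpha}}^2,
\]
with the first term absorbed on the left. I then split $u=u_H+u_L$, using $\|\La^{k\bar\alpha}u_L\|_{H^{s-2k\bar\alpha}}\le C\|u_L\|_{L^2}\le C(1+\tau)^{-\frac{n}{4\alpha}}M_2(t)$ from \eqref{4.1jj} and $\|u\|_{L^\infty}^{2\theta}\le C(1+\tau)^{-\frac{n\theta}{2\alpha}}(M_2(t)^2+E(t)^2)^\theta$ from \eqref{4.21}.

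Next I would multiply the differential inequality by the weight $(1+t)^{(k-\frac12)\bar\alpha}$ and integrate in time. The weight derivative contributes an error of the form $(k-\tfrac12)\bar\alpha\,(1+\tau)^{(k-\frac12)\bar\alpha-1}\mathcal E_k$. For $k=0$ its coefficient is negative, so it is simply discarded; for $k\ge1$ I write $(1+\tau)^{(k-\frac12)\bar\alpha-1}\le(1+\tau)^{(k-\frac32)\bar\alpha}$ (because $\bar\alpha-1=-\alpha\le0$), which turns the error into exactly the dissipation integral of level $k-1$, already bounded by the induction hypothesis. Integrating the nonlinear part, the decaying factor $(1+\tau)^{-n\theta/2\alpha}$ (coming from $\theta>\frac{4\alpha}{n}$) guarantees convergence: the $u_H$-piece is absorbed into $L(t)^2$ and the $u_L$-piece into $M_2(t)^2$, both against the weight of \eqref{5.1}. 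Together with $\mathcal E_k(0)\le c\|u_0\|_{H^s}^2$ (valid since $2(k+1)\bar\alpha\le s$ for $k\le[\frac{s}{2\bar\alpha}]-1$), this yields the asserted estimate \eqref{a} with right-hand side $c\|u_0\|_{H^s}^2+c(M_2^2+E^2)^\theta(M_2^2+L^2)$.

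The main obstacle is exactly the regularity loss: the dissipation at each level is one order $\bar\alpha$ weaker than the energy at the same level, so no single-level Gronwall argument can close, and one is forced into the telescoping induction described above, where the level-$k$ weight error is paid for by the level-$(k-1)$ dissipation. The most delicate bookkeeping will be verifying that every time-weighted integral that arises is convergent at the endpoint $k=[\frac{s}{2\bar\alpha}]-1$ and that the Sobolev orders $s-2(k+1)\bar\alpha$ stay nonnegative there; this is precisely where the hypothesis $[\frac{s}{2\bar\alpha}]\ge\frac{n}{2\alpha}+\frac{3}{2}\bar\alpha$ is consumed, and checking the corresponding exponent inequalities for the top level is the step most likely to require care.
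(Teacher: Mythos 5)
Your overall strategy---a level-by-level time-weighted energy estimate on the high-frequency equation, with the regularity-loss dissipation at level $k$ controlling the energy at level $k+1$, the weight-derivative error at level $k$ paid for by the level-$(k-1)$ dissipation through induction, and the nonlinearity handled via the $L^\infty$ decay \eqref{4.21} and the definitions of $M_2$, $E$, $L$---is exactly the paper's proof. However, one step fails as written: the choice of energy functional. Testing the un-inverted equation \eqref{4.8jj}, which still carries $-m\Delta u_t$, against $\La^{k\bar\alpha}u_H$ in the $H^{s-2k\bar\alpha}$ inner product puts the term $m\|\La^{1+k\bar\alpha}u_H\|_{H^{s-2k\bar\alpha}}^2$ into $\mathcal E_k$. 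On the high-frequency support its symbol is $|\xi|^{2+2k\bar\alpha}(1+|\xi|^2)^{s-2k\bar\alpha}\sim|\xi|^{2(s+1-k\bar\alpha)}$, i.e.\ of total order $s+1-k\bar\alpha>s$ whenever $k\bar\alpha<1$ (in particular at $k=0$). Hence your claim $\mathcal E_k(0)\le c\|u_0\|_{H^s}^2$ is false for $u_0$ merely in $H^s$; the justification you give ($2(k+1)\bar\alpha\le s$) only guarantees that the Sobolev index $s-2(k+1)\bar\alpha$ is nonnegative, not that the top-order term stays within $H^s$.

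The paper avoids this by first applying $B^{-1}=(I-m\Delta)^{-1}$ (equation \eqref{5.3jj}) and testing with $\La^{2\beta}u_H$, $\beta=s-j\bar\alpha$: the time-derivative then contributes only $\frac12\frac{d}{dt}\|\La^{\beta}u_H\|_{L^2}^2\le\frac12\frac{d}{dt}\|u_H\|_{H^s}^2$, while the factor $(1+m|\xi|^2)^{-1}$ is shifted into the dissipation---which is precisely the source of the $\bar\alpha$-loss you describe, since $\frac{|\xi|^{2\beta+2\alpha}}{1+m|\xi|^2}\gtrsim |\xi|^{2(\beta-\bar\alpha)}$ on $|\xi|\ge 2R$. (For $k=0$ the paper instead combines \eqref{4.9jj} with \eqref{4.12jj} at $l=s$ so that the top term in the energy is $m\|\La^{s}u_H\|_{L^2}^2$.) Once you make this substitution your remaining steps---the Plancherel ratio computation, the induction in $k$, and the treatment of the nonlinear term---coincide with the paper's argument; the only residual discrepancy is that your Young-inequality absorption yields $(M_2^2+E^2)^{2\theta}$ in place of the stated $(M_2^2+E^2)^{\theta}$, which is harmless for the small-data bootstrap.
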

\begin{proof}
 (\ref{4.8jj}) can be rewritten as the following
\begin{eqnarray}\label{5.3jj}
\begin{cases}
\dis\frac{\partial u_H}{\partial t}+B^{-1}(-\Delta )u_H=B^{-1} (1-\chi(D))u^{\theta+1},\,\,&x\in\R^n ,t>0\\
u_H(0,x)=u_0(x)(1-\chi(D)),\,\,&x\in\R^n.
\end{cases}
\end{eqnarray}

From (\ref{4.1jj}), (\ref{4.9jj}) and (\ref{4.12jj}), let $l=s$, recall that $$s-\bar{\alpha}\geq \bar\alpha,\|\La^{s-\bar\alpha}u_H\|_{L^2}\geq c\|\La^{\bar\alpha} u_H\|_{L^2},$$ it yields
\begin{eqnarray}\label{5.5}
\begin{split}
&\frac{1}{2}\frac{d}{dt}\bigl(\|\La ^{s-1}u_H\|^2_{L^2}+m\|\La ^{s}u_H\|^2_{L^2}+\|u_H\|^2_{L^2}+m\|\La u_H\|^2_{L^2}\bigl)+C\|\La^{\bar{\alpha}}u_H\|_{H^{s-2\bar{\alpha}}}^2
\\ \leq & c\|u\|^{\theta}_{L^\infty}(\|u_L\|_{L^2}^2+\| \La^{s-1}u_H\|_{L^2}^2).
\end{split}
\end{eqnarray} 
Multiply (\ref{5.5}) by  $(1+t)^{-\frac{1}{2}\bar{\alpha}}$, integrate with respect to $t$, then
\begin{eqnarray}
\begin{split}
&(1+t)^{-\frac{1}{2}\bar{\alpha}}(\|u_H\|^2_{H^s}+\|u_H\|_{H^{s-1}}^2)+c\int_{0}^{t}(1+\tau)^{-\frac{\bar{\alpha}}{2}}\|\Lambda ^{\bar{\alpha}}u_H\|_{H^{s-2\bar{\alpha}}}^2d\tau \\
\leq & c\|u_0\|^2_{H^s} -\frac{1}{2}\bar{\alpha}\int^t_0(1+\tau)^{-\frac{\bar{\alpha}}{2}-1}(\|u_H\|_{H^s}^2+\|u_H\|_{H^{s-1}}^2)d\tau \\&+c\int^t_0 (1+\tau)^{-\frac{\bar{\alpha}}{2}}\|u\|^{\theta}_{L^\infty}(\|u_L\|_{L^2}^2+\| \La^{s-1}u_H\|_{L^2}^2)d\tau.
\end{split}
\end{eqnarray}
Since $\bar{\alpha}>0$, we have
\begin{eqnarray}
\begin{split}
&(1+t)^{-\frac{1}{2}\bar{\alpha}}(\|u_H\|_{H^s}^2+\|u_H\|_{H^{s-1}}^2)+c\int_{0}^{t}(1+\tau)^{-\frac{\bar{\alpha}}{2}}\|\Lambda ^{\bar{\alpha}}u_H\|^2_{H^{s-2\bar{\alpha}}}d\tau \\
\leq & c\|u_0\|^2_{H^s}+c\int^t_0 (1+\tau)^{-\frac{\bar{\alpha}}{2}}\|u\|^{\theta}_{L^\infty}(\|u_L\|_{L^2}^2+\| \La^{s-1}u_H\|_{L^2}^2)d\tau \\
\leq & c\|u_0\|^2_{H^s}+c\Bigl(M_2(t)^2+L(t)^2\Bigl)\Bigl(M_2(t)+E(t)\Bigl)^{\theta}.
\end{split}
\end{eqnarray}
Thus for $k=0$, (\ref{a}) holds.

Multipling  (\ref{5.3jj}) by  $\Lambda^{2\beta} u $, and integrating with respect to $x$, by Lemma \ref{l2.9} and (\ref{4.1jj}), we arrive at
\begin{eqnarray}\label{4.41}
\begin{split}
\frac{1}{2}\frac{d}{dt}\|\Lambda ^\beta u_H\|^2 +\|\frac{|\xi|^{\beta+\alpha}}{(1+m|\xi|^2)^{1/2}}\hat u_H\|^2_{L^2}& =  \int_{\R^n} B^{-1} (1-\chi(D))u^{\theta+1}\Lambda^{2\beta} u_H dx
\\&
\leq c
\|\Lambda^{\beta} u^{\theta+1}\|_{L^2}\|\Lambda ^{\beta} u_H\|_{L^2}
\\&\leq c\|u\|_{L^\infty}^{\theta}\Bigl ( \|\La^{\beta}u_L\|_{L^2}^2+\|\Lambda ^{\beta} u_H\|_{L^2}^2\Bigl).
\end{split}
\end{eqnarray}
For $|\xi|\geq 2R$ and $\beta\geq\bar{\alpha}$, there exists a constant $c$ such that 
\[\frac{|\xi|^{2\beta+2\alpha}}{1+m|\xi|^2}\geq c(1+|\xi|^{2\beta+2\alpha-2}), \]
then
\begin{eqnarray}
\|\frac{|\xi|^{2\beta+2\alpha}}{1+m|\xi|^2}\hat u_H\|_{L^2}\geq c \| u_H\|_{H^{\alpha+\beta-1}},
\end{eqnarray}
thus 
\begin{eqnarray}\label{5.7}
\begin{split}
\frac{d}{dt}\|\La ^{\beta}u_H\|^2_{L^2}+c\|u_H\|_{H^{\beta+\alpha-1}}^2
&\leq c \|u\|_{L^\infty}^{\theta}\Bigl ( \|\La^{\beta}u_L\|_{L^2}^2+\|\Lambda ^{\beta} u_H\|_{L^2}^2\Bigl).
\end{split}
\end{eqnarray}

Recall that $s>2(j+1)\bar{\alpha}$, $|\xi|\geq 2r$, it holds
then
\begin{eqnarray}\label{5.9}
\begin{split}
\|u_H\|^2_{H^{s-(j+1)\bar{\alpha}}}\geq \|\La^{s-(j+1)\bar{\alpha}}u_H\|&\geq \frac{1}{2}(\|\La^{(j+1)\bar{\alpha}}u_H\|^2+\|\La^{s-(j+1)\bar{\alpha}}u_H\|^2)\\&\geq C \|\La^{(j+1)\bar{\alpha}}u_H\|_{H^{s-2(j+1)\bar{\alpha}}}^2.
\end{split}
\end{eqnarray}
 Let $\beta=s-j\bar{\alpha}$ in (\ref{5.7}), from (\ref{5.9}), it yields
\begin{eqnarray}\label{5.10}
\begin{split}
&\frac{d}{dt}\|\La ^{s-j\bar{\alpha}}u_H\|^2_{L^2}+c\|\La^{(j+1)\bar{\alpha}}u_H\|_{H^{s-2(j+1)\bar{\alpha}}}^2\\
\leq &c\|u\|^{\theta}_{L^\infty
}\Bigl ( \|\Lambda ^{s-j\bar{\alpha}}u_L\|_{L^2}^2+\|\Lambda ^{s-j\bar{\alpha}} u_H\|_{L^2}^2\Bigl )\\
\leq &c\|u\|^{\theta}_{L^\infty
}\Bigl ( \|u_L\|_{L^2}^2+\|\Lambda ^{s-j\bar{\alpha}} u_H\|_{L^2}^2\Bigl ).
\end{split}
\end{eqnarray}
Multiplying (\ref{5.10}) by  $(1+t)^{j\bar{\alpha}-\frac{1}{2}\bar{\alpha}}$ and integrating with respect to $t$, from $$\|\Lambda^{j\bar{\alpha}}u_H\|_{H^{s-2j\bar\alpha}}^2\geq \|\Lambda ^{s-j\bar{\alpha}}u_H\|_{L^2}^2\geq c\|\Lambda^{j\bar{\alpha}}u_H\|_{H^{s-2j\bar\alpha}}^2,$$ we know that 
\begin{eqnarray}\label{5.11}
\begin{split}
&(1+t)^{j\bar{\alpha}-\frac{1}{2}\bar{\alpha}}\|\La^{j\bar{\alpha}}u_H\|^2_{H^{s-2j\bar{\alpha}}}+c\int^t_0(1+\tau)^{j\bar{\alpha}-\frac{1}{2}\bar{\alpha}}\|\Lambda ^{(j+1)\bar{\alpha}} u_H\|^2_{H^{s-2(j+1)\bar{\alpha}}}d\tau\\
& \leq c \int^t_0 (1+\tau)^{j\bar{\alpha}-\frac{1}{2}\bar{\alpha}}\|u\|^{\theta}_{L^\infty
}\Bigl ( \|u_L\|_{L^2}^2+\|\Lambda ^{s-j\bar{\alpha}} u_H\|_{L^2}^2\Bigl )d\tau \\&
\,\,+(j\bar{\alpha}-\frac{1}{2}\bar{\alpha}-1)\int ^{t}_0 (1+\tau)^{j\bar{\alpha}-\frac{1}{2}\bar{\alpha}-1} \|\Lambda ^{j\bar{\alpha}} u_H\|^2_{H^{s-2j\bar{\alpha}}}d\tau +c\|u_0\|_{H^s}^2,
\end{split}
\end{eqnarray}
and
\begin{eqnarray}\label{5.12}
\begin{split}
& \int^t_0 (1+\tau)^{j\bar{\alpha}-\frac{1}{2}\bar{\alpha}}\|u\|^{\theta}_{L^\infty
}\Bigl ( \|u_L\|_{L^2}^2+\|\Lambda ^{s-j\bar{\alpha}} u_H\|_{L^2}^2\Bigl )d\tau 
\\& \leq c (M_2(t)+E(t))^{\theta}  \int^t_0 (1+\tau)^{-\frac{n\theta}{4\alpha}}(1+\tau)^{j\bar{\alpha}-\frac{\bar{\alpha}}{2}}\Bigl (  \|u_L\|_{L^2}^2+\|\Lambda ^{s-j\bar{\alpha}} u_H\|_{L^2}^2\Bigl )d\tau
\\&\leq
c \Bigl(M_2(t)^2+L(t)^2\Bigl)\Bigl(M_2(t)+E(t)\Bigl)^\theta.
\end{split}
\end{eqnarray}

If $1\leq k=j\leq \frac 12+\frac{1}{\bar\alpha}$, by (\ref{5.11}) and (\ref{5.12}), (\ref{a}) is obviously true.

If $k\geq \frac 12+\frac{1}{\bar\alpha}$,  
assume that for $k=j$, (\ref{a}) holds. Then
\begin{eqnarray}\label{4.6}
\begin{split}
&\int^t_0 (1+t)^{j\bar{\alpha}-\frac{1}{2}\bar{\alpha}}\|\La^{(j+1)\bar{\alpha}}u_H\|_{H^{s-2(j+1)\bar\alpha}}^2d\tau \\ \leq & c \Bigl(M_2(t)^2+L(t)^2\Bigl)\Bigl(M_2(t)+E(t)\Bigl)^\theta+c\|u_0\|_{H^s}^2.
\end{split}
\end{eqnarray}
and from
\begin{eqnarray}
\begin{split}
\int^t_0 &(1+\tau)^{(j+1)\bar{\alpha}-\frac{1}{2}\bar{\alpha}-1}\|\La^{(j+1)\bar{\alpha}}u_H\|_{H^{s-2(j+1)\bar{\alpha}}}^2d\tau \\ \leq & c \int^t_0 (1+t)^{j\bar{\alpha}-\frac{1}{2}\bar{\alpha}}\|\La^{(j+1)\bar{\alpha}}u_H\|_{H^{s-2(j+1)\bar\alpha}}^2d\tau,
\end{split}
\end{eqnarray}
it is easy to see that (\ref{a}) is proved for $k=j+1$. The general case can be shown by using an induction argument.
\end{proof}
\begin{proposition}
	If $0<\alpha<1$, $[\frac{s}{2\bar{\alpha}}]\geq\frac{n}{2\alpha}+\frac{3}{2}\bar{\alpha}$, $\theta>\frac{4\alpha}{n},\theta\in \z$.
	 	Then for 
	 	$0\leq l\leq N_0 $ and $E_0$ is sufficiently small, it follows that
	 \begin{equation}\label{5.15}	M_2(t)\leq cE_0,
	 \end{equation}
	  where $N_0,E_0$ are defined as in Theorem 1.2
\end{proposition}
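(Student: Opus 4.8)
The plan is to upgrade the \emph{a priori} bounds of Propositions 4.3 and 4.4 into a single closed inequality for the combined norm $\mathcal N(t):=M_2(t)+E(t)+L(t)$ and then to close it by a continuity (bootstrap) argument. First I would feed the pointwise decay estimate (\ref{4.38jjj}) back into the definition (\ref{4.23jjj}) of $M_2$: multiplying $\|\Lambda^\beta u\|_{L^2}$ by $(1+t)^{\frac{n}{4\alpha}+\frac{\beta}{2\alpha}}$ and taking the supremum over $0\le\beta\le N_0$ and $0\le y\le t$ cancels the decay factor and yields
\begin{equation}
M_2(t)\le c\bigl(E_0+(M_2(t)+E(t))^{\theta+1}\bigr).
\end{equation}

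Next I would read off the matching bounds for $E$ and $L$ from the hierarchy (\ref{a}). For each fixed $k$ the first term on its left-hand side is precisely the $k$-th summand in the definition (\ref{5.1}) of $E(t)^2$, so taking the supremum in time and summing the finitely many indices $k=0,\dots,[\frac{s}{2\bar\alpha}]-1$ gives
\begin{equation}
E(t)^2\le c\,E_0^2+c\bigl(M_2(t)^2+E(t)^2\bigr)^{\theta}\bigl(M_2(t)^2+L(t)^2\bigr).
\end{equation}
Summing instead the dissipative time-integrals on the left of (\ref{a}) reproduces, up to the fixed number of levels, the integrals defining $L(t)^2$ in (\ref{5.1}); after reconciling the consecutive time-weights (which is exactly where $\bar\alpha<1$ and the finite depth $[\frac{s}{2\bar\alpha}]-1$ enter, as in the passage just before (\ref{a})) this produces the same right-hand side,
\begin{equation}
L(t)^2\le c\,E_0^2+c\bigl(M_2(t)^2+E(t)^2\bigr)^{\theta}\bigl(M_2(t)^2+L(t)^2\bigr).
\end{equation}

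Adding the three displays, recalling $\theta+1\ge 2$ and writing everything in terms of $\mathcal N=M_2+E+L$, I would obtain the scalar inequality $\mathcal N(t)\le c\bigl(E_0+\mathcal N(t)^{\theta+1}\bigr)$. Since $\mathcal N$ is continuous in $t$ by local well-posedness, with $\mathcal N(0)\le cE_0$ (note $L(0)=0$ and $E(0)^2\le c\|u_0\|_{H^s}^2$), and since for small $E_0$ the map $x\mapsto c(E_0+x^{\theta+1})$ has an attracting fixed point of size $O(E_0)$, a standard bootstrap shows that $\mathcal N(t)\le 2cE_0$ for all $t\ge0$ as soon as $E_0\le\delta_0$ is chosen so small that $c(2cE_0)^{\theta+1}\le cE_0$. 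In particular $M_2(t)\le\mathcal N(t)\le cE_0$, which is (\ref{5.15}). Throughout, the smallness $\|u\|_{L^\infty}\le c(1+t)^{-n/(4\alpha)}(M_2+E)$ from (\ref{4.21}) is used with $\frac{n\theta}{4\alpha}>1$ to make the nonlinear time-integrals converge, so the bootstrap must be run simultaneously with the running smallness assumption $M_2(t)+E(t)<\varepsilon$ that underlies those propositions.

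I expect the genuine obstacle to be the $L(t)$ bound: the dissipation generated by (\ref{a}) at level $k$ carries one power of the time-weight $(1+\tau)^{\bar\alpha}$ less than the summand of $L(t)$ it must control, so the estimate does not close level-by-level but only after summing the whole hierarchy and invoking the depth hypothesis $[\frac{s}{2\bar\alpha}]\ge\frac{n}{2\alpha}+\frac32\bar\alpha$, which simultaneously guarantees enough derivatives for the Gagliardo--Nirenberg step in (\ref{4.21}) and keeps the number of levels finite. Everything else is routine assembly followed by the continuity argument.
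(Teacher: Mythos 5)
Your proposal takes essentially the same route as the paper: the paper's proof sets $Y(t)^2=E(t)^2+L(t)^2+M_2(t)^2$, combines (\ref{4.38jjj}) with (\ref{a}) to obtain the closed inequality $Y(t)^2\leq cE_0^2+cY(t)^{2\theta+2}+cY(t)^{\theta+1}$, and concludes by a continuity argument, exactly as you do with your combined quantity $M_2+E+L$. The one substantive difference is that you explicitly flag the time-weight/index mismatch in extracting the bound on $L(t)^2$ from the dissipation terms of (\ref{a}) --- a genuine subtlety that the paper's two-line proof passes over in silence.
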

\begin{proof}
Let $Y(t)^2=E(t)^2+L(t)^2+M_2(t)^2$,  from (\ref{4.38jjj}) and (\ref{a}) we have $$Y(t)^2\leq c (\|u\|_{L^1}^2+\|u_0\|_{H^s}^2)+cY(t)^{2\theta+2}+cY(t)^{\theta+1 }.$$ For $\|u_0\|_{L^1}+\|u_0\|_{H^s}$ sufficiently small, by continuity, we have \begin{eqnarray}
Y(t)\leq CE_0.
\end{eqnarray}
Thus the proof is complete.
\end{proof}
\subsection{ Proof of Theorem 1.1 and Theorem 1.2} 
The local existence of the solution for (1.1) can be proved by using the standard method （(Refer to Lemma  3.3 in \cite{LYW}, or Theorem 2.2 in \cite{K2}), we omit its details. Then combining the local solution with  estimates (\ref{4.21-1}) and (\ref{5.15}), we obtain the global existence of the solution to the Cauchy problem for $\alpha>0$ if $E_0$ is sufficiently small. This proves Theorem 1.1 and Theorem 1.2.



\begin{thebibliography}{99}
\bibitem{T}Ting T W. Parabolic and pseudo-parabolic partial differential equations[J]. Journal of the Mathematical Society of Japan, 1969, 21(3): 440-453.

\bibitem{ST} Showalter R E, Ting T W. Pseudoparabolic partial differential equations[J]. SIAM Journal on Mathematical Analysis, 1970, 1(1): 1-26.
\bibitem{BZ}Barenblatt G I, Zheltov I P, Kochina I N. Basic concepts in the theory of seepage of homogeneous liquids in fissured rocks[J]. Journal of applied mathematics and mechanics, 1960, 24(5): 1286-1303.
\bibitem{L} Levine H A. Some nonexistence and instability theorems for solutions of formally parabolic equations of the form $Pu_t=-Au+\emph{F}(u)$ [J]. Archive for Rational Mechanics and Analysis, 1973, 51(5): 371-386.
\bibitem{GT} Gopala Rao V R, Ting T W. Solutions of pseudo-heat equations in the whole space[J]. Archive for Rational Mechanics and Analysis, 1972, 49(1): 57-78.

\bibitem{B} Brill H. A semilinear Sobolev evolution equation in a Banach space[J]. Journal of Differential Equations, 1977, 24(3): 412-425.
\bibitem{CY}Cao Y, Yin J, Wang C. Cauchy problems of semilinear pseudo-parabolic equations[J]. Journal of Differential Equations, 2009, 246(12): 4568-4590.
 
\bibitem{FY} Favini A, Yagi A. Degenerate differential equations in Banach spaces[M]. CRC Press, 1998.
\bibitem{KN}Kaikina E I, Naumkin P I, Shishmarev I A. The Cauchy problem for the Sobolev type equation with power non-linearity[J]. Izvestiia Rossijskoj Akademii Nauk-Seriia Matematitcheskaia, 2005, 69(1): 61-114.
\bibitem{KQ}Kwek K H, Qu C C. Alternative principle for pseudo-parabolic equations[J]. Dynamic Systems and Applications, 1996, 5: 211-218.
\bibitem{P}Ptashnyk M. Degenerate quasilinear pseudoparabolic equations with memory terms and variational inequalities[J]. Nonlinear Analysis: Theory, Methods  Applications, 2007, 66(12): 2653-2675.
\bibitem{SF}Sviridyuk G A, Fedorov V E. Analytic semigroups with kernel and linear equations of Sobolev type[J]. Siberian Mathematical Journal, 1995, 36(5): 973-987.
\bibitem{K}Korpusov M O, Sveshnikov A G. Three-dimensional nonlinear evolution equations of pseudoparabolic type in problems of mathematical physics[J]. Zhurnal Vychislitel'noi Matematiki i Matematicheskoi Fiziki, 2003, 43(12): 1835-1869.
\bibitem{PG}
Pu X, Guo B, Zhang J, et al. Global weak solutions to the 1-D fractional Landau-Lifshitz equation[J]. Discrete and Continuous Dynamical Systems - Series B, 2010, 14(1): 199-207.
\bibitem{GHX}Guo B, Han Y, Xin J. Existence of the global smooth solution to the period boundary value problem of fractional nonlinear Schr$\ddot{o}$dinger equation[J]. Applied Mathematics  Computation, 2008, 204(1): 468-477.

\bibitem{T2}Tarasov V. Fractional Dynamics: Application of Fractional Calculus to Dynamics of Particles, Fields and Media[M]. 2011.
\bibitem{P1}Podlubny I. Fractional Derivatives, Fractional Integrals, and Fractional Differential Equations in PMatlab[M]. 2011.

\bibitem{KS1}Kilbas, A. A, Srivastava, H. M, Trujillo, J. J. Theory and Applications of Fractional Differential Equations, Volume 204 (North-Holland Mathematics Studies)[M]. Elsevier, 2006.
\bibitem{CS}
Chu C M, Sun J J, Suo H M. Multiplicity of Positive Solutions for Critical Fractional Equation Involving Concave-Convex Nonlinearities and Sign-Changing Weight Functions[J]. Mediterranean Journal of Mathematics, 2016: 1-10.
\bibitem{Z}Zhang S. Existence of positive solutions for some class of nonlinear fractional equation[J]. Journal of Mathematical Analysis and Applications, 2003, 278(1): 136-148.\bibitem{D}Duan R, Strain RM. Optimal large‐time behavior of the Vlasov-Maxwell-Boltzmann system in the whole space. Communications on Pure and Applied Mathematic[J]. 2011, 64(11): 1497-546.

\bibitem{DR}Duan R, Ruan L, Zhu C. Optimal decay rates to conservation laws with diffusion-type terms of regularity-gain and regularity-loss[J]. Mathematical Models and Methods in Applied Sciences. 2012, 22(07): 1250012.

\bibitem{IH}Ide K, Haramoto K, Kawashima S. Decay property of regularity-loss type for dissipative Timoshenko system[J]. Mathematical Models and Methods in Applied Sciences. 2008, 18(05): 647-67.
\bibitem{D4}Deng S, Wang W, Zhao H. Existence theory and $L^p$ estimates for the solution of nonlinear viscous wave equation[J]. Nonlinear Analysis Real World Applications, 2010, 11(5): 4404-4414.

 \bibitem{S2}Strichartz R. Multipliers on fractional Sobolev spaces[J]. Journal of Mathematics  Mechanic, 1967 (16), 1031-1060. 
\bibitem{J}Ju N. Existence and Uniqueness of the Solution to the Dissipative 2D Quasi-Geostrophic Equations in the Sobolev Space[J]. Communications in Mathematical Physics, 2004, 251(2): 365-376.


\bibitem{H}Henry D. Geometric theory of semilinear parabolic equations[M]. 1981.
 \bibitem{T1}Triebel H. Interpolation theory, function spaces, differential operators[J]. 1995.
 \bibitem{LYW}Lai S, Yan H, Wang Y. The local well-posedness of solutions for a nonlinear pseudo-parabolic equation[J]. Boundary Value Problems, 2014, 2014(1):1-8.
\bibitem{K2}Karch G. Asymptotic behaviour of solutions to some pseudoparabolic equations[J]. Applied Mathematics Letters, 1997, 20(3):271-289.



























\end{thebibliography}
\end{document}